\newtheorem{dfn}{Definition}
\newtheorem{thm}[dfn]{Theorem}
\newtheorem{cor}[dfn]{Corollary}
\newtheorem{lem}[dfn]{Lemma}
\newtheorem{remark}[dfn]{Remark}
\title{\bf Generalized weighted trapezoid and Gr\"uss type inequalities on time scales\footnote{This is a preprint of a paper whose final and definite form is published open access in the Australian Journal of Mathematical Analysis and Applications. Cite this paper as ``E. R. Nwaeze, Generalized weighted trapezoid and Gr\"uss type inequalities on time scales, Aust. J. Math. Anal. Appl., {\bf 11}(1)(2017), Article 4, 1–13."} }
\author{Eze R. Nwaeze\\
{\tt enwaeze@mytu.tuskegee.edu}}
\date{Department of Mathematics, Tuskegee University,\\
Tuskegee, AL 36088, USA}
\begin{document}
{\small
\maketitle

\begin{abstract}

In this work, we obtain some new generalized weighted trapezoid and Gr\"uss type inequalities on time scales for parameter functions. Our results give a broader generalization of the results due to Pachpatte in \cite{Pach}. In addition, the continuous and discrete cases are also considered from which, other results are obtained.

\bigskip

\noindent \textbf{Keywords:} Montgomery's identity, trapezoid inequality, Gr\"uss inequality, time scales.

\medskip

\noindent \textbf{2000 Mathematics Subject Classification:} 26D15, 54C30, 26D10.
\end{abstract}


\section{Introduction}
In 2003, Pachpatte \cite{Pach} obtained the following versions (see also \cite{Mit1,Mit2} for the original versions) of the trapezoid and Gr\"uss type inequalities:

\begin{thm}\label{Pach1}
Let $f:[a, b]\rightarrow \mathbb{R}$ be continuous on $[a, b]$ and differentiable on $(a, b),$ whose derivative $f':(a, b)\rightarrow \mathbb{R}$ is bounded on $(a, b).$ Then $$\Bigg|\frac{1}{2}\Big(f^2(b) - f^2(a)\Big) - \dfrac{f(b)-f(a)}{b-a}\int_a^b f(x)dx\Bigg|\leq \frac{1}{3}(b-a)^2||f'||^2_{\infty}.$$
\end{thm}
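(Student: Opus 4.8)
The plan is to derive the inequality from the Montgomery identity, which is the natural tool here (and the one foreshadowed by the keywords). Recall that for $f$ as in the hypotheses one has
$$f(x) - \frac{1}{b-a}\int_a^b f(t)\,dt = \frac{1}{b-a}\int_a^b p(x,t)\,f'(t)\,dt,$$
where the Peano kernel is $p(x,t)=t-a$ for $a\le t\le x$ and $p(x,t)=t-b$ for $x<t\le b$. The first step is to rewrite the two algebraic pieces on the left-hand side of the claimed inequality in integral form via the fundamental theorem of calculus: since $(f^2)'=2ff'$ we have $\tfrac12\big(f^2(b)-f^2(a)\big)=\int_a^b f(x)f'(x)\,dx$, and likewise $f(b)-f(a)=\int_a^b f'(t)\,dt$. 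Thus the quantity inside the absolute value equals $\int_a^b f(x)f'(x)\,dx - \frac{1}{b-a}\big(\int_a^b f'(t)\,dt\big)\big(\int_a^b f(x)\,dx\big)$, which I recognize as $(b-a)$ times the \v{C}eby\v{s}ev functional $T(f,f')$.

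Next I would multiply the Montgomery identity by $f'(x)$ and integrate in $x$ over $[a,b]$. The left side then becomes exactly the expression identified above, while the right side becomes the double integral $\frac{1}{b-a}\int_a^b f'(x)\int_a^b p(x,t)f'(t)\,dt\,dx$. This produces a clean identity representing the target quantity as a weighted double integral of $f'$ against the kernel $p$.

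To finish, I would estimate that double integral by pulling the bound $|f'|\le \|f'\|_\infty$ outside, so that the absolute value of the left side is at most $\frac{\|f'\|_\infty^2}{b-a}\int_a^b\!\int_a^b |p(x,t)|\,dt\,dx$. A direct computation yields $\int_a^b |p(x,t)|\,dt = \frac{(x-a)^2+(b-x)^2}{2}$ and then $\int_a^b\frac{(x-a)^2+(b-x)^2}{2}\,dx = \frac{(b-a)^3}{3}$, so the bound collapses to $\frac13(b-a)^2\|f'\|_\infty^2$, precisely as asserted.

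The main obstacle I anticipate is not any single estimate---each is elementary---but rather correctly setting up the Montgomery identity under the given regularity (continuity on $[a,b]$, differentiability only on the open interval, $f'$ merely bounded) and justifying the multiply-and-integrate step; the kernel computation $\int\!\int|p|=\frac{(b-a)^3}{3}$ is routine. As an alternative that avoids Montgomery entirely, one could invoke the Korkine identity $T(f,f')=\frac{1}{2(b-a)^2}\int_a^b\!\int_a^b (f(x)-f(y))(f'(x)-f'(y))\,dx\,dy$ and bound the integrand using $|f(x)-f(y)|\le \|f'\|_\infty|x-y|$ together with $|f'(x)-f'(y)|\le 2\|f'\|_\infty$, which leads to the same constant via $\int_a^b\!\int_a^b|x-y|\,dx\,dy=\frac{(b-a)^3}{3}$.
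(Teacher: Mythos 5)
Your argument is correct, and it is essentially the same as the one underlying the paper: Theorem \ref{Pach1} is recovered there as the $\lambda=0$, $\mathbb{T}=\mathbb{R}$ case of Theorem \ref{Trap}, whose proof is exactly your scheme --- Montgomery identity with the Peano kernel $p(x,t)$ (the kernel $K(s,t)$ of Lemma \ref{Wlem1} with $w(t)=t$, $\psi(\lambda)=\lambda=0$), multiplied by the derivative, integrated, and bounded via $\int_a^b\!\int_a^b|p(x,t)|\,dt\,dx=\tfrac{(b-a)^3}{3}$. Your kernel computations and the resulting constant $\tfrac13$ check out, so no further comparison is needed.
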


\begin{thm}\label{Pach2}
Let $f, g:[a, b]\rightarrow \mathbb{R}$ be continuous on $[a, b]$ and differentiable on $(a, b),$ whose derivative $f', g':(a, b)\rightarrow \mathbb{R}$ are bounded on $(a, b).$ Then
\begin{align*}
&\Bigg|\dfrac{1}{b-a}\int_a^b f(x)g(x)dx - \Bigg(\dfrac{1}{b-a}\int_a^b f(x)dx\Bigg)\Bigg(\dfrac{1}{b-a}\int_a^b g(x)dx\Bigg)\Bigg|\\
&\leq \dfrac{1}{2(b-a)^2}\int_a^b\Big(||f'||_{\infty}|g(x)|+||g'||_{\infty}|f(x)| \Big)E(x) dx,
\end{align*}
where 
$E(x)=\frac{1}{4}(b-a)^2+\Big(x-\frac{a+b}{2}\Big)^2,$ for $x\in [a, b].$
\end{thm}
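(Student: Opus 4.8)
The plan is to derive the inequality from Montgomery's identity, which represents a differentiable function as its integral mean plus a remainder carried by the Peano kernel. First I would recall that for $f$ differentiable on $(a,b)$,
$$f(x) = \dfrac{1}{b-a}\int_a^b f(s)\,ds + \dfrac{1}{b-a}\int_a^b p(x,t)f'(t)\,dt,$$
where $p(x,t)=t-a$ for $a\leq t\leq x$ and $p(x,t)=t-b$ for $x<t\leq b$, and that the identical formula holds for $g$. Rearranging gives $f(x)-\tfrac{1}{b-a}\int_a^b f(s)\,ds=\tfrac{1}{b-a}\int_a^b p(x,t)f'(t)\,dt$, which isolates the deviation of $f$ from its mean.

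Next I would form the Chebyshev functional $T(f,g)$ appearing on the left-hand side. Multiplying the rearranged identity by $g(x)$, integrating in $x$ over $[a,b]$, and dividing by $b-a$ produces the double-integral representation
$$T(f,g)=\dfrac{1}{(b-a)^2}\int_a^b\!\!\int_a^b p(x,t)f'(t)g(x)\,dt\,dx.$$
Since $T(f,g)=T(g,f)$, interchanging the roles of $f$ and $g$ yields a second representation with $g'(t)f(x)$ in place of $f'(t)g(x)$. Averaging the two gives the symmetric identity
$$T(f,g)=\dfrac{1}{2(b-a)^2}\int_a^b\!\!\int_a^b p(x,t)\Big[f'(t)g(x)+g'(t)f(x)\Big]\,dt\,dx.$$

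From here the estimate is direct. I would pass the absolute value inside both integrals, bound $|f'(t)|\leq ||f'||_{\infty}$ and $|g'(t)|\leq ||g'||_{\infty}$, and carry out the inner $t$-integration. The only genuine computation is the kernel integral
$$\int_a^b |p(x,t)|\,dt=\int_a^x (t-a)\,dt+\int_x^b (b-t)\,dt=\dfrac{(x-a)^2+(b-x)^2}{2},$$
after which I would verify the algebraic identity $\tfrac12\big[(x-a)^2+(b-x)^2\big]=\tfrac14(b-a)^2+\big(x-\tfrac{a+b}{2}\big)^2=E(x)$ by completing the square about the midpoint $\tfrac{a+b}{2}$. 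Substituting this back reproduces exactly the claimed right-hand side.

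The step I expect to require the most care is the symmetrization: one must exploit both Montgomery representations and average them to obtain the balanced weight $||f'||_{\infty}|g(x)|+||g'||_{\infty}|f(x)|$ rather than a lopsided bound in terms of a single derivative. The remaining work is bookkeeping, namely recognizing that $\int_a^b|p(x,t)|\,dt$ collapses precisely to the stated kernel $E(x)$.
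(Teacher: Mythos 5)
Your proof is correct and is essentially the argument underlying the paper: the paper never proves Theorem \ref{Pach2} directly (it is quoted from Pachpatte and only recovered, in the final remark, as the $\mathbb{T}=\mathbb{R}$, $w(t)=t$, $\psi(\lambda)=\lambda$, $\lambda=0$ specialization of Corollary \ref{cor19}), but the proof of the general Theorem \ref{Gruss} is exactly your symmetrization scheme --- apply the (weighted, generalized) Montgomery identity to each function, cross-multiply by the other, add, integrate, and bound the Peano kernel, which in this special case is precisely your $p(x,t)$. Your kernel evaluation $\int_a^b|p(x,t)|\,dt=\tfrac12\big[(x-a)^2+(b-x)^2\big]=E(x)$ is also correct, so nothing is missing.
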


In 1988, Hilger \cite{Hilger} introduced the concept of time scale as a unifier between the continuous and discrete calculus. Since then, many researchers have been able to extend known classical integral inequalities to time scales (see for example \cite{Karpuz,Liu,Tuna1,Tuna2,Tuna3}). In 2009, Ng\^o and Liu \cite{Liu2} gave a sharp Gr\"uss inequality on time scales. For the sake of this work, we present the following results of Liu and Tuna \cite{Tuna4} which are generalizations of the trapezoid and Gr\"uss type inequalities on time scales.

\begin{thm}\label{WT1}
Let $0\leq k\leq 1, g:[a, b]\rightarrow [0,\infty)$ be continuous and positive and $h:[a, b]\rightarrow \mathbb{R}$ be differentiable such that $h^{\Delta}(t )=g(t)$ on $[a, b].$ Suppose also that $a, b, s, t\in \mathbb{T},$ ~$a<b,$ and $f:[a, b]\rightarrow \mathbb{R}$ is differentiable. Then the following inequality holds
\begin{align}
\nonumber
 &\Bigg|(1-k)\Big(f^2(b)- f^2(a) \Big)+k\Big(f(\sigma(a))f(b) - f(\sigma(b))f(a)\Big) \\
\nonumber
&+ \Bigg[ (k-1)\frac{f(b)-f(a)}{\int_a^b g(t)\Delta t} + k\frac{f(\sigma(b)) - f(\sigma(a))}{\int_a^b g(t)\Delta t}\Bigg]\int_a^b g(t)f(\sigma(t))\Delta t\\
\nonumber
&- \frac{f(b)-f(a)}{\int_a^b g(t)\Delta t}\int_a^b g(t)f(\sigma^2(t))\Delta t\Bigg|\\
\nonumber
&\leq \dfrac{M(N+M)}{\int_a^b g(t)\Delta t} \int_a^b\Bigg(\int_a^b \big|S(t, s)\big|\Delta s\Bigg)\Delta t,
\end{align}
where
\begin{equation*}\label{WT2}
S(t, s)=
\begin{cases}
h(s)-\Big((1-k)h(a)+kh(t)\Big), ~~~~s\in[a, t),\\
h(s)-\Big(kh(t)+(1-k)h(b)\Big), ~~~~s\in[t, b],
\end{cases}
\end{equation*}
$M=\sup\limits_{a<t<b}\Big|f^{\Delta}(t)\Big|<\infty ~~and~~ N=\sup\limits_{a<t<b}\Big|f^{\Delta}(\sigma(t))\Big|<\infty.$
\end{thm}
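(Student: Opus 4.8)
The plan is to reduce the whole estimate to a single \emph{weighted Montgomery identity} on the time scale, apply it twice, and then estimate crudely. Throughout write $G=\int_a^b g(t)\Delta t=h(b)-h(a)$ (using $h^\Delta=g$), $I_1=\int_a^b g(t)f(\sigma(t))\Delta t$ and $I_2=\int_a^b g(t)f(\sigma^2(t))\Delta t$. \emph{Step 1 (the identity).} First I would establish that for every delta-differentiable $\phi$ on $[a,b]$,
\begin{equation*}
\int_a^b S(t,s)\phi^\Delta(s)\Delta s=k\big(h(b)-h(t)\big)\phi(b)+k\big(h(t)-h(a)\big)\phi(a)+(1-k)G\,\phi(t)-\int_a^b g(s)\phi(\sigma(s))\Delta s .
\end{equation*}
This is obtained by splitting $\int_a^b=\int_a^t+\int_t^b$, inserting the two branches of $S(t,s)$, and applying the time-scale rule $\int u\,\phi^\Delta\Delta s=[u\phi]-\int u^\Delta\phi^\sigma\Delta s$ with $u=h$ on each piece (so $u^\Delta=g$); the constant parts $(1-k)h(a)+kh(t)$ and $kh(t)+(1-k)h(b)$ integrate against $\phi^\Delta$ to telescoping boundary terms, and collecting everything yields the displayed identity.

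\emph{Step 2 (two applications and a cancellation).} Next I would apply Step 1 once to $\phi=f$ and once to $\phi=f\circ\sigma$; for the latter note $\phi(\sigma(s))=f(\sigma^2(s))$, so its integral term is exactly $I_2$ and its boundary values are $f(\sigma(b)),f(\sigma(a)),f(\sigma(t))$. Denoting the two Montgomery integrals by $\mathcal{I}_f(t)$ and $\mathcal{I}_{f^\sigma}(t)$, I claim that
\begin{equation*}
\frac1G\int_a^b\big(\mathcal{I}_f(t)+\mathcal{I}_{f^\sigma}(t)\big)f^\Delta(t)\,\Delta t
\end{equation*}
equals precisely the bracketed quantity on the left of the asserted inequality. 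The verification rests on the two auxiliary evaluations, again by the same integration-by-parts rule,
\begin{equation*}
\int_a^b\big(h(b)-h(t)\big)f^\Delta(t)\Delta t=-Gf(a)+I_1,\qquad\int_a^b\big(h(t)-h(a)\big)f^\Delta(t)\Delta t=Gf(b)-I_1,
\end{equation*}
together with the product rule $(f^2)^\Delta=(f+f^\sigma)f^\Delta$, which turns $\int_a^b f f^\Delta\Delta t$ into $f^2(b)-f^2(a)-\int_a^b f(\sigma(t))f^\Delta(t)\Delta t$. The crucial point is that the two stray terms $\mp(1-k)\int_a^b f(\sigma(t))f^\Delta(t)\Delta t$ produced respectively by the $f$- and $f^\sigma$-applications cancel, and what remains assembles exactly into $(1-k)(f^2(b)-f^2(a))$, the cross term $k(f(\sigma(a))f(b)-f(\sigma(b))f(a))$, the $I_1$-coefficient $(k-1)\frac{f(b)-f(a)}{G}+k\frac{f(\sigma(b))-f(\sigma(a))}{G}$, and $-\frac{f(b)-f(a)}{G}I_2$.

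\emph{Step 3 (estimate).} From the identity of Step 2 I would bound, using $|f^\Delta(t)|\le M$,
\begin{equation*}
\bigg|\frac1G\int_a^b\big(\mathcal{I}_f(t)+\mathcal{I}_{f^\sigma}(t)\big)f^\Delta(t)\Delta t\bigg|\le\frac{M}{G}\int_a^b\big|\mathcal{I}_f(t)+\mathcal{I}_{f^\sigma}(t)\big|\Delta t,
\end{equation*}
and then, since $\mathcal{I}_f(t)+\mathcal{I}_{f^\sigma}(t)=\int_a^b S(t,s)\big(f^\Delta(s)+(f\circ\sigma)^\Delta(s)\big)\Delta s$,
\begin{equation*}
\big|\mathcal{I}_f(t)+\mathcal{I}_{f^\sigma}(t)\big|\le\int_a^b|S(t,s)|\big(|f^\Delta(s)|+|(f\circ\sigma)^\Delta(s)|\big)\Delta s\le(M+N)\int_a^b|S(t,s)|\Delta s,
\end{equation*}
where $|f^\Delta(s)|\le M$ and $|(f\circ\sigma)^\Delta(s)|\le N$. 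Combining the two estimates produces the right-hand side $\frac{M(N+M)}{G}\int_a^b\big(\int_a^b|S(t,s)|\Delta s\big)\Delta t$.

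\emph{Main obstacle.} The delicate part is the bookkeeping of Step 2: one must carry both Montgomery identities through the weighting by $f^\Delta(t)/G$, evaluate the boundary integrals with the correct signs, and confirm that the $\int_a^b f(\sigma(t))f^\Delta(t)\Delta t$ contributions cancel so that only the four advertised terms survive. The continuity and positivity of $g$ (hence $G>0$) and the finiteness of $M$ and $N$ are what legitimize the division by $G$ and the final bounding.
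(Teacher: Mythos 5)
Your proposal is correct and follows essentially the same route as the paper: Theorem \ref{WT1} is quoted from Liu and Tuna \cite{Tuna4} without proof here, but the paper's proof of its generalization (Theorem \ref{Trap}) proceeds exactly as you do --- a weighted Montgomery identity applied to both $f$ and $f\circ\sigma$, summed, multiplied by $f^{\Delta}(t)$, converted via $(f^2)^{\Delta}=(f+f^{\sigma})f^{\Delta}$, integrated over $[a,b]$, and then bounded by $M(N+M)$. The only cosmetic difference is that you derive the kernel identity for $S(t,s)$ directly by integration by parts rather than invoking it as a separate lemma (the role played by Lemma \ref{Wlem1} in the paper), and you tacitly identify $(f\circ\sigma)^{\Delta}$ with $f^{\Delta}\circ\sigma$ --- the same convention the paper uses in its equation (\ref{eqn2}).
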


\begin{thm}\label{WT3}
Let $0\leq k\leq 1, g:[a, b]\rightarrow [0,\infty)$ be continuous and positive and $h:[a, b]\rightarrow \mathbb{R}$ be differentiable such that $h^{\Delta}(t )=g(t)$ on $[a, b].$ Suppose also that $a, b, s, t\in \mathbb{T},$ ~$a<b,$ and $p, q:[a, b]\rightarrow \mathbb{R}$ are differentiable. Then the following inequality holds
\begin{align*}
\nonumber
&\Bigg| 2(1-k)\Bigg(\int_a^bg(t)\Delta t\Bigg)\Bigg(\int_a^bp(t)q(t)\Delta t\Bigg)\\
&+k\int_a^b\Bigg\{q(t)\Bigg[\Bigg(\int_a^t g(s)\Delta s\Bigg)p(a) + \Bigg(\int_t^b g(s)\Delta s\Bigg)p(b) \Bigg] \\
&+ p(t)\Bigg[\Bigg(\int_a^t g(s)\Delta s\Bigg)q(a) + \Bigg(\int_t^b g(s)\Delta s\Bigg)q(b) \Bigg] \Bigg\}\Delta t\\
&-\Bigg[\Bigg(\int_a^b q(t)\Delta t\Bigg)\Bigg(\int_a^b g(t)p(\sigma(t))\Delta t\Bigg)  + \Bigg(\int_a^b p(t)\Delta t\Bigg)\Bigg(\int_a^b g(t)q(\sigma(t))\Delta t\Bigg) \Bigg]\Bigg|\\
&\leq \int_a^b\Big(P|q(t)| + Q|p(t)|\Big)\Bigg(\int_a^b \big|S(t, s)\big|\Delta s\Bigg)\Delta t,
\end{align*}
where
\begin{equation*}\label{WT4}
S(t, s)=
\begin{cases}
h(s)-\Big((1-k)h(a)+kh(t)\Big), ~~~~s\in[a, t),\\
h(s)-\Big(kh(t)+(1-k)h(b)\Big), ~~~~s\in[t, b],
\end{cases}
\end{equation*}
$P=\sup\limits_{a<t<b}\Big|p^{\Delta}(t)\Big|<\infty ~~and~~ Q=\sup\limits_{a<t<b}\Big|q^{\Delta}(t)\Big|<\infty.$
\end{thm}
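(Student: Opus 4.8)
The plan is to derive the entire statement from a single weighted Montgomery-type identity on time scales, applied separately to $p$ and $q$ and then combined symmetrically. First I would establish, for an arbitrary differentiable $f:[a,b]\to\mathbb{R}$ and each fixed $t\in[a,b]$, the identity
\begin{align*}
\int_a^b S(t,s)\,f^{\Delta}(s)\,\Delta s
&=(1-k)\Big(\int_a^b g(s)\Delta s\Big)f(t)
+k\Big(\int_a^t g(s)\Delta s\Big)f(a)\\
&\quad+k\Big(\int_t^b g(s)\Delta s\Big)f(b)
-\int_a^b g(s)f(\sigma(s))\Delta s.
\end{align*}
To prove it I would split $\int_a^b S(t,s)f^{\Delta}(s)\Delta s$ at $s=t$ according to the two branches of the kernel and apply the time-scale integration-by-parts rule $\int_\alpha^\beta u(s)v^{\Delta}(s)\Delta s=[uv]_\alpha^\beta-\int_\alpha^\beta u^{\Delta}(s)v(\sigma(s))\Delta s$ on each piece, with $u(s)=S(t,s)$ and $v=f$. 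Since $S(t,s)$ differs from $h(s)$ only by an $s$-independent constant on each branch, its $\Delta_s$-derivative is $h^{\Delta}(s)=g(s)$, which produces the integral $\int_a^b g(s)f(\sigma(s))\Delta s$; the boundary evaluations at $a,t,b$, after using $h(b)-h(a)=\int_a^b g$, $h(t)-h(a)=\int_a^t g$, and $h(b)-h(t)=\int_t^b g$, collapse to the three weighted point-value terms above.

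The telescoping of the two $f(t)$-contributions into the single term $(1-k)\big(\int_a^b g\big)f(t)$ is the one place where the particular convex-combination weights $1-k$ and $k$ in $S$ are essential, so I would verify that cancellation carefully. Next I would write this identity once for $p$ and once for $q$, multiply the first by $q(t)$ and the second by $p(t)$, add them, and integrate the result in $t$ over $[a,b]$ with respect to $\Delta t$. The term $2(1-k)\big(\int_a^b g\big)p(t)q(t)$ integrates to the first summand on the left-hand side of the theorem; the two $k$-weighted boundary terms reassemble into the $k\int_a^b\{\dots\}\Delta t$ expression; and the two $\sigma$-shifted integrals factor as $\big(\int_a^b q\big)\big(\int_a^b g\,p(\sigma)\big)+\big(\int_a^b p\big)\big(\int_a^b g\,q(\sigma)\big)$, matching the last bracket. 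Hence the entire quantity inside the absolute value on the left equals
$$\int_a^b\Big[q(t)\int_a^b S(t,s)p^{\Delta}(s)\Delta s+p(t)\int_a^b S(t,s)q^{\Delta}(s)\Delta s\Big]\Delta t.$$

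Finally I would apply the triangle inequality to this double integral and bound $|p^{\Delta}(s)|\le P$ and $|q^{\Delta}(s)|\le Q$ uniformly, pulling these suprema out of the inner $\Delta s$-integrals; this yields $\int_a^b\big(P|q(t)|+Q|p(t)|\big)\big(\int_a^b|S(t,s)|\Delta s\big)\Delta t$, the claimed right-hand side. I expect the main obstacle to be the bookkeeping in the identity rather than the estimation: one must keep the $\sigma$-shift introduced by delta integration by parts attached to $f$ (so that $p(\sigma),q(\sigma)$ appear, not $p,q$), handle the jump of $S$ at $s=t$ without generating a spurious boundary term there, and confirm that the branch constants are precisely those making the $f(t)$-terms combine. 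Once that identity is secured, the remaining steps are routine linearity and majorization.
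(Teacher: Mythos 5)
Your proposal is correct: the Montgomery-type identity you derive for the kernel $S(t,s)$ checks out (the branch constants do make the two $f(t)$-contributions telescope to $(1-k)\big(\int_a^b g\big)f(t)$), and the symmetrization in $p$ and $q$ followed by integration in $t$ and uniform majorization of $p^{\Delta}, q^{\Delta}$ gives exactly the stated bound. Note that the paper itself only quotes this theorem from Liu and Tuna without proof, but your route is essentially identical to the one the paper uses for its own generalization, Theorem \ref{Gruss}: there Lemma \ref{Wlem1} plays the role of your kernel identity, and the remaining steps (multiply by $q(t)$ and $p(t)$, add, integrate, apply the triangle inequality) are the same.
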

Recently, Xu and Fang \cite{XuFang} introduced a technique of parameter functions. In light of this, they obtained a new Ostrowski type inequality for parameter functions. Inspired by this technique and  the idea used in \cite{Tuna4}, we prove another version of the trapezoid and Gr\"uss type inequalities for parameter functions via a new weighted Peano Kernel.

The paper is arranged as follows. In Section~\ref{sec:Prelim}, we recall necessary results and definitions in time scale theory. Our results are formulated and proved in Section~\ref{sec:MR}.

\section{Preliminaries}\label{sec:Prelim}
We start by presenting the following time scale essentials that will come handy in what follows. For more on the theory of time scales, we refer the reader to the books of Bohner and Peterson \cite{Bohner1} and  Bohner and Peterson \cite{Bohner2}. 

\begin{dfn}

~~A {\it time scale} $\mathbb{T}$ is an arbitrary nonempty closed subset of $\mathbb{R}.$ The forward {\it jump operator} $\sigma:\mathbb{T}\rightarrow\mathbb{T}$ and backward {\it jump operator} $\rho:\mathbb{T}\rightarrow\mathbb{T}$ are defined by $\sigma(t):=\inf\{s\in\mathbb{T}: s>t\}$ for $t\in\mathbb{T}$ and $\rho(t):=\sup\{s\in\mathbb{T}: s<t\}$ for $t\in\mathbb{T},$ respectively.
 Clearly, we see that $\sigma(t)\geq t$ and $\rho(t)\leq t$ for all $t\in \mathbb{T}.$ If $\sigma(t)>t,$ then we say that $t$ is right-scattered, while if $\rho(t)<t,$ then we say that $t$ is left-scattered. If $\sigma(t)=t,$ then $t$ is called right dense, and if $\rho(t)=t$ then $t$ is called left dense. Points that are both right dense and left dense are called dense. The set ${\mathbb{T}}^k$ is defined as follows: if $\mathbb{T}$ has a left scattered maximum $m,$ then ${\mathbb{T}}^k=\mathbb{T}-m;$ otherwise, ${\mathbb{T}}^k=\mathbb{T}.$
For $a, b \in \mathbb{T}$ with $a\leq b,$ we define the interval $[a, b]$ in $\mathbb{T}$ by $[a, b]=\{t\in\mathbb{T}: a\leq t\leq b\}.$ Open intervals and half-open intervals are defined in the same manner.
\end{dfn}

\begin{dfn}
The function $f:\mathbb{T}\rightarrow \mathbb{R},$ is called differentiable at $t\in\mathbb{T}^k,$ with delta derivative $f^{\Delta}(t)\in \mathbb{R},$ if for any given $\epsilon>0$ there exist a neighborhood $U$ of $t$ such that $$\left|f(\sigma(t))-f(s)-f^{\Delta}(t)(\sigma(t)-s)\right|\leq \epsilon |\sigma(t)-s|,~~~~\forall s\in U.$$
\end{dfn}
If $\mathbb{T}=\mathbb{R},$ then $f^{\Delta}(t)=\dfrac{df(t)}{dt},$ and if $\mathbb{T}=\mathbb{Z},$ then $f^{\Delta}(t)=f(t+1)-f(t).$

\begin{thm}\label{pro}
Let $f, g:\mathbb{T}\rightarrow\mathbb{R}$ be two differentiable functions at $t\in {\mathbb{T}}^k.$ Then the product $fg:\mathbb{T}\rightarrow\mathbb{R}$ is also differentiable at $t$ with 
$$(fg)^{\Delta}(t)=f^{\Delta}(t)g(t)+f(\sigma(t))g^{\Delta}(t)=f(t)g^{\Delta}(t)+f^{\Delta}(t)g(\sigma(t)).$$
\end{thm}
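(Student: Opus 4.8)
The plan is to argue directly from the definition of the delta derivative recalled above. Since $fg=gf$, the two stated identities differ only by the interchange of $f$ and $g$, so it suffices to establish the first, $(fg)^{\Delta}(t)=f^{\Delta}(t)g(t)+f(\sigma(t))g^{\Delta}(t)$, and then swap the roles of the two functions to read off the second.

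First I would fix $\epsilon\in(0,1)$ and record the two defining estimates. Because $f$ and $g$ are differentiable at $t$, there is a neighborhood $U$ of $t$ on which
\[
\big|f(\sigma(t))-f(s)-f^{\Delta}(t)(\sigma(t)-s)\big|\le \epsilon^{*}|\sigma(t)-s| \quad\text{and}\quad \big|g(\sigma(t))-g(s)-g^{\Delta}(t)(\sigma(t)-s)\big|\le \epsilon^{*}|\sigma(t)-s|,
\]
where the scaled tolerance $\epsilon^{*}$ is to be chosen at the end. I would also use the standard fact that differentiability at $t$ forces continuity at $t$; after shrinking $U$ this gives $|g(s)-g(t)|\le \epsilon^{*}$, and hence the crude bound $|g(s)|\le |g(t)|+1$, for all $s\in U$.

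The core of the proof is a purely algebraic rearrangement of
\[
D(s):=(fg)(\sigma(t))-(fg)(s)-\big[f^{\Delta}(t)g(t)+f(\sigma(t))g^{\Delta}(t)\big](\sigma(t)-s).
\]
Splitting $f(\sigma(t))g(\sigma(t))-f(s)g(s)=f(\sigma(t))[g(\sigma(t))-g(s)]+g(s)[f(\sigma(t))-f(s)]$ and then inserting and subtracting the linear corrections $g^{\Delta}(t)(\sigma(t)-s)$ and $f^{\Delta}(t)(\sigma(t)-s)$, I expect $D(s)$ to collapse into exactly three pieces,
\[
D(s)=f(\sigma(t))\big[g(\sigma(t))-g(s)-g^{\Delta}(t)(\sigma(t)-s)\big]+g(s)\big[f(\sigma(t))-f(s)-f^{\Delta}(t)(\sigma(t)-s)\big]+f^{\Delta}(t)\big[g(s)-g(t)\big](\sigma(t)-s).
\]
The first bracket (a $g$-difference) is then bounded via the second defining estimate, the second bracket (an $f$-difference) via the first, using $|g(s)|\le|g(t)|+1$ for its coefficient, and the last term via $|g(s)-g(t)|\le\epsilon^{*}$; summing gives $|D(s)|\le\big(|f(\sigma(t))|+|g(t)|+1+|f^{\Delta}(t)|\big)\epsilon^{*}|\sigma(t)-s|$. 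Setting $C=|f(\sigma(t))|+|g(t)|+1+|f^{\Delta}(t)|$ and choosing $\epsilon^{*}=\epsilon/C$ then yields $|D(s)|\le \epsilon|\sigma(t)-s|$ on $U$, which is exactly the statement that $f^{\Delta}(t)g(t)+f(\sigma(t))g^{\Delta}(t)$ is the delta derivative of $fg$ at $t$.

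The one point that needs care is the order of quantifiers in choosing $\epsilon^{*}$: the constant $C$ must be built in advance out of the fixed numbers $|f(\sigma(t))|$, $|g(t)|$, $|f^{\Delta}(t)|$ together with the uniform bound $|g(s)|\le|g(t)|+1$, so that $\epsilon^{*}=\epsilon/C$ is a single tolerance valid for every $s\in U$. Once this is arranged the three-term decomposition is routine, and the second identity follows with no extra work by repeating the argument with $f$ and $g$ interchanged.
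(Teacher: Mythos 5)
Your argument is correct: the three-term decomposition of $D(s)$, the use of continuity of $g$ at $t$ (which does follow from differentiability), and the advance choice of $\epsilon^{*}=\epsilon/C$ are all sound, and the second identity does follow by symmetry since $fg=gf$. The paper itself states this product rule without proof as a standard preliminary (it is Theorem~1.20 of Bohner and Peterson's book, cited in Section~\ref{sec:Prelim}), and your proof is essentially the classical argument given there, so there is nothing to reconcile.
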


\begin{dfn}
The function $f:\mathbb{T}\rightarrow \mathbb{R}$ is said to be $rd-$continuous if it is continuous at all dense points $t\in\mathbb{T}$ and its left-sided limits exist at all left dense points $t\in\mathbb{T}.$
\end{dfn}

\begin{dfn}\label{def2}
Let $f$ be a $rd-$continuous function. Then $g:\mathbb{T}\rightarrow\mathbb{R}$ is called the antiderivative of $f$ on $\mathbb{T}$ if it is differentiable on $\mathbb{T}$ and satisfies $g^{\Delta}(t)=f(t)$ for any $t\in {\mathbb{T}}^k.$ In this case, we have
$$\int_a^b f(s)\Delta s = g(b)-g(a).$$
\end{dfn}

\begin{thm}\label{A1}
If $a,b,c \in \mathbb{T}$ with $a<c<b,$ $\alpha\in\mathbb{R}$ and $f, g$ are $rd-$continuous, then
\begin{enumerate}
\item [\rm (i)] $\int_a^b [f(t)+g(t)]\Delta t=\int_a^b f(t)\Delta t + \int_a^b g(t)\Delta t.$
\item [\rm (ii)] $\int_a^b \alpha f(t)\Delta t= \alpha \int_a^b f(t)\Delta t$
\item [\rm (iii)] $\int_a^b f(t)\Delta t= -\int_b^a f(t)\Delta t$
\item [\rm (iv)] $\int_a^b f(t)\Delta t=\int_a^c f(t)\Delta t + \int_c^b f(t)\Delta t.$
\item [\rm (v)] $\left|\int_a^b f(t)\Delta t\right| \leq \int_a^b |f(t)|\Delta t$ for all $t\in [a, b].$
\item [\rm (vi)] $\int_a^b f(t)g^{\Delta}(t)\Delta t=(fg)(b) - (fg)(a) - \int_a^b f^{\Delta}(t)g(\sigma(t))\Delta t.$
\end{enumerate}
\end{thm}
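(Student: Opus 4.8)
The plan is to reduce the whole statement to a single weighted Montgomery-type identity built from the kernel $S$, and then to \emph{polarize} that identity across the two functions $p$ and $q$. The starting observation is that for each fixed $t$ the kernel satisfies $S^{\Delta}_s(t,s)=h^{\Delta}(s)=g(s)$ on both branches $s\in[a,t)$ and $s\in[t,b]$, because the bracketed terms $(1-k)h(a)+kh(t)$ and $kh(t)+(1-k)h(b)$ are constants in $s$. So $S(t,\cdot)$ is a piecewise antiderivative of $g$, i.e.\ a weighted Peano kernel. I would therefore split $\int_a^b S(t,s)\phi^{\Delta}(s)\Delta s=\int_a^t+\int_t^b$ for a generic differentiable $\phi$, apply the integration-by-parts rule of Theorem~\ref{A1}(vi) on each piece, and track the boundary values. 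Using $h(b)-h(a)=\int_a^b g\,\Delta s$, $h(t)-h(a)=\int_a^t g\,\Delta s$ and $h(b)-h(t)=\int_t^b g\,\Delta s$, the two contributions at $s=t$ combine (the $(1-k)$ pieces from $[a,t)$ and $[t,b]$ adding to a single $\phi(t)$ term), and I expect to obtain the identity
\[
\int_a^b S(t,s)\phi^{\Delta}(s)\Delta s=(1-k)\Big(\textstyle\int_a^b g\,\Delta s\Big)\phi(t)+k\Big(\textstyle\int_a^t g\,\Delta s\Big)\phi(a)+k\Big(\textstyle\int_t^b g\,\Delta s\Big)\phi(b)-\int_a^b g(s)\phi(\sigma(s))\Delta s,
\]
valid for every differentiable $\phi$. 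This identity is the engine of the argument.

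Next I would write the identity once with $\phi=p$ and once with $\phi=q$, multiply the first by $q(t)$ and the second by $p(t)$, add them, and then integrate the sum in $t$ over $[a,b]$. On the left this produces
\[
\int_a^b\Big[q(t)\int_a^b S(t,s)p^{\Delta}(s)\Delta s+p(t)\int_a^b S(t,s)q^{\Delta}(s)\Delta s\Big]\Delta t .
\]
On the right, the term $2(1-k)\big(\int_a^b g\,\Delta t\big)\int_a^b p(t)q(t)\Delta t$ comes from the two $\phi(t)$ terms, the factor-$k$ double integral reproduces exactly the curly-brace expression in the statement, and the two convolution terms $\int_a^b g\,p(\sigma)$ and $\int_a^b g\,q(\sigma)$ are constants in $t$, so integrating $q(t)$ and $p(t)$ against them gives the last two product terms $\big(\int_a^b q\big)\big(\int_a^b g\,p(\sigma)\big)$ and $\big(\int_a^b p\big)\big(\int_a^b g\,q(\sigma)\big)$. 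Thus the entire quantity inside the absolute value in Theorem~\ref{WT3} equals the double-kernel integral displayed above; the bulk of the work is verifying that this bookkeeping matches term by term.

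Finally I would estimate. Applying Theorem~\ref{A1}(v) (the triangle inequality for delta integrals) to the double-kernel integral gives
\[
\Big|\cdots\Big|\le\int_a^b\Big[|q(t)|\int_a^b|S(t,s)|\,|p^{\Delta}(s)|\,\Delta s+|p(t)|\int_a^b|S(t,s)|\,|q^{\Delta}(s)|\,\Delta s\Big]\Delta t,
\]
and then bounding $|p^{\Delta}(s)|\le P$ and $|q^{\Delta}(s)|\le Q$ and pulling these constants out yields precisely $\int_a^b\big(P|q(t)|+Q|p(t)|\big)\big(\int_a^b|S(t,s)|\Delta s\big)\Delta t$, the claimed bound.

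The main obstacle will be the first paragraph: the clean derivation of the Montgomery identity, and in particular the boundary bookkeeping at $s=t$, where the kernel is discontinuous (by its definition $s=t$ belongs to the second branch) and where one must be careful that the integration-by-parts boundary evaluations use the appropriate one-sided branch of $S$ on $[a,t]$ and on $[t,b]$. Once the $(1-k)$ boundary terms from the two branches are seen to coalesce into the single $(1-k)\big(\int_a^b g\big)\phi(t)$ term while the $k$-weighted terms land on $\phi(a)$ and $\phi(b)$, the remaining steps are algebraic rearrangement and the routine application of Theorem~\ref{A1}(v)–(vi); no further analytic difficulty is expected.
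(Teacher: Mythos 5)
Your proposal does not prove the statement it was asked to prove. The statement is Theorem~\ref{A1}, the list of elementary properties of the delta integral (linearity, additivity over subintervals, the triangle inequality, and integration by parts), which the paper quotes as standard background from Bohner and Peterson. What you have sketched instead is a proof of Theorem~\ref{WT3}, the Liu--Tuna weighted Gr\"uss inequality: the kernel $S(t,s)$, the parameter $k$, the polarization across $p$ and $q$, and the bound $\int_a^b\big(P|q(t)|+Q|p(t)|\big)\big(\int_a^b|S(t,s)|\Delta s\big)\Delta t$ all belong to that theorem, and none of items (i)--(vi) of Theorem~\ref{A1} is addressed anywhere in your argument. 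The mismatch is not merely cosmetic: read as a proof of Theorem~\ref{A1}, your argument is circular, since you explicitly invoke item (vi) (integration by parts) to derive your Montgomery-type identity on each of the pieces $[a,t)$ and $[t,b]$, and item (v) (the triangle inequality for delta integrals) in the final estimation step. One cannot prove a theorem by an argument whose two key tools are items of that same theorem. (As a blind sketch of Theorem~\ref{WT3} your outline is essentially the standard Liu--Tuna argument and the bookkeeping you describe does work out, but that is not the task at hand.)

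What a proof of Theorem~\ref{A1} actually requires is much more elementary and rests on Definition~\ref{def2} and Theorem~\ref{pro}. For (i) and (ii): if $F$ and $G$ are antiderivatives of $f$ and $g$, then $F+G$ and $\alpha F$ are antiderivatives of $f+g$ and $\alpha f$ by linearity of the delta derivative, so both sides evaluate to $(F+G)(b)-(F+G)(a)$, respectively $\alpha\big(F(b)-F(a)\big)$. Items (iii) and (iv) are immediate from the evaluation formula $\int_a^b f(t)\Delta t=F(b)-F(a)$, since $F(b)-F(a)=-\big(F(a)-F(b)\big)$ and $F(b)-F(a)=\big(F(c)-F(a)\big)+\big(F(b)-F(c)\big)$. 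Item (v) follows from monotonicity of the delta integral: an antiderivative of a nonnegative rd-continuous function is nondecreasing (its delta derivative is nonnegative), and applying this to $|f|-f\geq 0$ and $|f|+f\geq 0$ gives $\pm\int_a^b f(t)\Delta t\leq\int_a^b|f(t)|\Delta t$. Item (vi) is obtained by integrating the product rule of Theorem~\ref{pro} in the form $(fg)^{\Delta}(t)=f(t)g^{\Delta}(t)+f^{\Delta}(t)g(\sigma(t))$ over $[a,b]$, noting that $fg$ is by construction an antiderivative of the right-hand side, so that Definition~\ref{def2} yields $(fg)(b)-(fg)(a)=\int_a^b f(t)g^{\Delta}(t)\Delta t+\int_a^b f^{\Delta}(t)g(\sigma(t))\Delta t$, which rearranges to the claim (here $f$ and $g$ must additionally be assumed delta differentiable, as the product rule requires).
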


\begin{dfn}
Let $h_k:\mathbb{T}^2\rightarrow\mathbb{T},$~$k\in \mathbb{N}$ be functions that are recursively defined as $$h_0(t, s)=1$$ and $$h_{k+1}(t,s)=\int_s^t h_k(\tau, s)\Delta \tau,~~{for~all}~~s, t \in \mathbb{T}.$$
\end{dfn}
When $\mathbb{T}=\mathbb{R},$ then for all $s, t \in \mathbb{T},$ $$h_k(t, s)=\frac{(t-s)^k}{k!}.$$


\section{Main results}\label{sec:MR}
For the proof of our main results, we will need the following lemma due to Nwaeze \cite{Nwaeze}.

\begin{lem}[A weighted generalized Montgomery Identity]\label{Wlem1}
Let $\nu:[a, b]\rightarrow [0,\infty)$ be $rd-$continuous and positive and $w:[a, b]\rightarrow \mathbb{R}$ be differentiable such that $w^{\Delta}(t )=\nu(t)$ on $[a, b].$ Suppose also that $a, b, s, t\in \mathbb{T},$ ~$a<b,$~$f:[a, b]\rightarrow \mathbb{R}$ is differentiable, and $\psi$ is a function of $[0, 1]$ into $[0, 1].$ Then we have the following equation
\begin{align}\label{Weq}
\nonumber
 \left[\dfrac{1+\psi(1-\lambda)-\psi(\lambda)}{2}f(t) + \dfrac{\psi(\lambda)f(a)+\left(1-\psi(1-\lambda)\right)f(b)}{2}\right]\int_a^b \nu(t)\Delta t\\
=\int_a^b K(s,t)f^{\Delta}(s)\Delta s + \int_a^b \nu(s)f(\sigma(s))\Delta s,
\end{align}
where
\begin{equation}\label{WE}
K(s, t)=
\begin{cases}
w(s)-\left(w(a)+\psi(\lambda)\frac{w(b)-w(a)}{2}\right), ~~~~s\in[a, t),\\
w(s)-\left(w(a)+(1+\psi(1-\lambda))\frac{w(b)-w(a)}{2}\right), ~~~~s\in[t, b].
\end{cases}
\end{equation}
\end{lem}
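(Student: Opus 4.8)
The plan is to start from the right-hand side of \eqref{Weq} and transform the kernel integral $\int_a^b K(s,t)f^{\Delta}(s)\Delta s$ by integration by parts, reducing it to boundary terms together with $-\int_a^b \nu(s)f(\sigma(s))\Delta s$; the latter then cancels the extra $\nu f(\sigma)$ integral, leaving only the task of verifying that the boundary terms reproduce the bracketed expression on the left. The structural observation that drives the whole argument is that, by \eqref{WE}, on each of the two pieces $[a,t)$ and $[t,b]$ the kernel $K(\cdot,t)$ equals $w$ minus a constant (a constant that depends on $t$ but not on the integration variable $s$). Hence $K^{\Delta}(s,t)=w^{\Delta}(s)=\nu(s)$ on each piece, which is exactly what makes the delta-antiderivative explicit in the integration-by-parts step.

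First I would use additivity (Theorem \ref{A1}(iv)) and the piecewise definition \eqref{WE} to write
\[
\int_a^b K(s,t)f^{\Delta}(s)\Delta s=\int_a^t K_1(s)f^{\Delta}(s)\Delta s+\int_t^b K_2(s)f^{\Delta}(s)\Delta s,
\]
where $K_1(s)=w(s)-\big(w(a)+\psi(\lambda)\tfrac{w(b)-w(a)}{2}\big)$ and $K_2(s)=w(s)-\big(w(a)+(1+\psi(1-\lambda))\tfrac{w(b)-w(a)}{2}\big)$. To each summand I apply the integration-by-parts formula of Theorem \ref{A1}(vi), with $K_i$ as the first factor and our $f$ as the function whose delta derivative is integrated; using $K_i^{\Delta}=\nu$ this gives
\[
\int_a^t K_1(s)f^{\Delta}(s)\Delta s=K_1(t)f(t)-K_1(a)f(a)-\int_a^t \nu(s)f(\sigma(s))\Delta s,
\]
and the analogous identity on $[t,b]$ with boundary contribution $K_2(b)f(b)-K_2(t)f(t)$.

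Adding the two relations and recombining the two $\nu f(\sigma)$ integrals into a single $\int_a^b$ via Theorem \ref{A1}(iv), the desired identity collapses to the coefficient check
\[
\big(K_1(t)-K_2(t)\big)f(t)-K_1(a)f(a)+K_2(b)f(b)=\Big[\text{bracket}\Big]\int_a^b\nu(t)\Delta t,
\]
where the bracket is the left-hand coefficient. Here I would invoke $\int_a^b\nu(t)\Delta t=w(b)-w(a)$ from Definition \ref{def2} and evaluate the three coefficients directly: one finds $K_1(t)-K_2(t)=\tfrac{1+\psi(1-\lambda)-\psi(\lambda)}{2}\big(w(b)-w(a)\big)$, $-K_1(a)=\tfrac{\psi(\lambda)}{2}\big(w(b)-w(a)\big)$, and $K_2(b)=\tfrac{1-\psi(1-\lambda)}{2}\big(w(b)-w(a)\big)$, each matching the corresponding term of the bracket in \eqref{Weq}.

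The step requiring the most care --- and the main obstacle --- is the bookkeeping of the boundary contributions at the interior node $s=t$. The first piece contributes $+K_1(t)f(t)$ at its upper endpoint while the second contributes $-K_2(t)f(t)$ at its lower endpoint; because the kernel's additive constant jumps across $s=t$, these do not cancel but instead combine to produce precisely the $f(t)$ term on the left. Keeping the two constants distinct throughout, and correctly pairing each endpoint with the piece to which it belongs, is where a sign error or an index slip would silently break the identity.
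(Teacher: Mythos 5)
Your proof is correct. The paper does not actually prove Lemma~\ref{Wlem1} (it is imported from \cite{Nwaeze}), but your argument --- split the kernel integral at $t$, integrate by parts on each piece using $K^{\Delta}(\cdot,t)=\nu$, and verify that the boundary coefficients $K_1(t)-K_2(t)$, $-K_1(a)$ and $K_2(b)$ reproduce the bracketed expression times $w(b)-w(a)=\int_a^b\nu(t)\Delta t$ --- is exactly the standard derivation of such weighted Montgomery identities on time scales, and all three coefficient evaluations you give check out.
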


\subsection{A weighted trapezoid type inequality on time scales}

\begin{thm}\label{Trap}
Let $\nu:[a, b]\rightarrow [0,\infty)$ be continuous and positive and $w:[a, b]\rightarrow \mathbb{R}$ be differentiable such that $w^{\Delta}(t )=\nu(t)$ on $[a, b].$ Suppose also that $a, b, s, t\in \mathbb{T},$ ~$a<b,$~$f:[a, b]\rightarrow \mathbb{R}$ is differentiable, and $\psi$ is a function of $[0, 1]$ into $[0, 1].$ Then we have the following inequality

\begin{align}
\nonumber
 &\Bigg|\dfrac{1+\psi(1-\lambda)-\psi(\lambda)}{2}\Big(f^2(b)- f^2(a) \Big) -\dfrac{f(b) - f(a)}{\int_a^b \nu(t)\Delta t}\int_a^b \nu(s)\Big(f(\sigma(s))+ f(\sigma^2(s)) \Big)\Delta s\\
\nonumber
&+ \dfrac{\psi(\lambda)\Big(f(a)+ f(\sigma(a))\Big) +\left(1-\psi(1-\lambda)\right)\Big(f(b)+f(\sigma(b))\Big)}{2}\Big(f(b)- f(a)\Big)\Bigg|\\
&\leq \dfrac{M(N+M)}{\int_a^b \nu(t)\Delta t} \int_a^b\Bigg(\int_a^b \big|K(s,t)\big|\Delta s\Bigg)\Delta t,
\end{align}
where
\begin{equation}
K(s, t)=
\begin{cases}
w(s)-\left(w(a)+\psi(\lambda)\frac{w(b)-w(a)}{2}\right), ~~~~s\in[a, t),\\
w(s)-\left(w(a)+(1+\psi(1-\lambda))\frac{w(b)-w(a)}{2}\right), ~~~~s\in[t, b],
\end{cases}
\end{equation}
$M=\sup\limits_{a<t<b}\Big|f^{\Delta}(t)\Big|<\infty ~~and~~ N=\sup\limits_{a<t<b}\Big|f^{\Delta}(\sigma(t))\Big|<\infty.$
\end{thm}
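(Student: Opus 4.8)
The starting point is the weighted Montgomery identity of Lemma~\ref{Wlem1}. I would write it in the compact form
$[A f(t)+B]\,W = E(t) + \int_a^b \nu(s) f(\sigma(s))\,\Delta s$,
where $W=\int_a^b\nu(t)\Delta t$, where $A=\tfrac{1+\psi(1-\lambda)-\psi(\lambda)}{2}$ and $B=\tfrac{\psi(\lambda)f(a)+(1-\psi(1-\lambda))f(b)}{2}$ are the two bracket coefficients, and where $E(t)=\int_a^b K(s,t) f^\Delta(s)\Delta s$ abbreviates the kernel integral. The decisive move is to multiply this identity by $f^\Delta(t)+f^\Delta(\sigma(t))$, then $\Delta$-integrate in $t$ over $[a,b]$ and divide by $W$. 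Two features force this particular multiplier. First, the time-scale Leibniz rule of Theorem~\ref{pro}, namely $(f^2)^\Delta=(f+f\circ\sigma)\,f^\Delta$, is what upgrades the linear term $A f(t)$ into the quadratic difference: pairing the $f(t)$ coming from the identity with $f^\Delta(t)+f^\Delta(\sigma(t))$ is arranged precisely so that, after the product rule and integration by parts (Theorem~\ref{A1}(vi)), the linear piece integrates to $A\int_a^b (f^2)^\Delta\,\Delta t=A\big(f^2(b)-f^2(a)\big)$. Second, on the right-hand side the factor $f^\Delta(t)+f^\Delta(\sigma(t))$ is bounded in absolute value by $M+N$, which is exactly what produces the constant $M(N+M)$ in the claimed bound.

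Carrying out the left-hand side, the term $B\int_a^b(f^\Delta(t)+f^\Delta(\sigma(t)))\Delta t$, together with the correction arising from the fact that $f(t)\big(f^\Delta(t)+f^\Delta(\sigma(t))\big)$ differs from $(f^2)^\Delta(t)$, is to be reorganized via repeated use of Theorem~\ref{A1}(vi) into the $\sigma$-shifted boundary expression $\tfrac{\psi(\lambda)(f(a)+f(\sigma(a)))+(1-\psi(1-\lambda))(f(b)+f(\sigma(b)))}{2}\big(f(b)-f(a)\big)$. The $t$-independent term $\int_a^b\nu(s)f(\sigma(s))\Delta s$, after multiplication by $f^\Delta(t)+f^\Delta(\sigma(t))$ and a further integration by parts, contributes both the $f(\sigma)$ and the $f(\sigma^2)$ weighted integrals, assembling into $-\tfrac{f(b)-f(a)}{W}\int_a^b\nu(s)\big(f(\sigma(s))+f(\sigma^2(s))\big)\Delta s$. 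Collecting these pieces reproduces the left-hand side of the asserted inequality. For the right-hand side I would estimate $|E(t)|\le M\int_a^b|K(s,t)|\Delta s$ by Theorem~\ref{A1}(v) using $|f^\Delta(s)|\le M$, bound $|f^\Delta(t)+f^\Delta(\sigma(t))|\le M+N$, and integrate to obtain $\tfrac{M(N+M)}{W}\int_a^b\big(\int_a^b|K(s,t)|\Delta s\big)\Delta t$; applying the triangle inequality (again Theorem~\ref{A1}(v)) to the whole identity then closes the argument.

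The main obstacle is the left-hand side bookkeeping described in the second paragraph. On a general time scale $f(t)\big(f^\Delta(t)+f^\Delta(\sigma(t))\big)$ is \emph{not} $(f^2)^\Delta(t)$, and $\int_a^b f^\Delta(\sigma(t))\Delta t$ is \emph{not} simply $f(\sigma(b))-f(\sigma(a))$, so extracting exactly the stated $f(\sigma^2)$ integral and the symmetric boundary coefficient requires a carefully ordered and correctly signed sequence of product-rule and integration-by-parts steps. This is where sign and forward-shift errors are most likely. The most useful safeguard is the continuous check: setting $\mathbb{T}=\mathbb{R}$, so that $\sigma=\mathrm{id}$, $f(\sigma)=f(\sigma^2)=f$ and $M=N=\|f'\|_\infty$, collapses the whole statement to the weighted continuous analogue of Theorem~\ref{Pach1}, and every intermediate manipulation should respect this limit.
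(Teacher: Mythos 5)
Your overall architecture (Montgomery identity, product rule, integrate, triangle inequality) is right in spirit, but the decisive step is set up the wrong way round, and the gap is not just bookkeeping. The time-scale product rule reads $(f^2)^{\Delta}(t)=\big(f(t)+f(\sigma(t))\big)f^{\Delta}(t)$: the sum sits on the \emph{function}, not on the \emph{derivative}. You instead multiply the single identity, whose left side contains only $f(t)$, by $f^{\Delta}(t)+f^{\Delta}(\sigma(t))$, producing $f(t)\big(f^{\Delta}(t)+f^{\Delta}(\sigma(t))\big)$, which is not $(f^2)^{\Delta}(t)$ on a general time scale. You acknowledge this and hope to absorb the discrepancy by integration by parts, but that cannot work: the correction term involves $\int_a^b f(t)f^{\Delta}(\sigma(t))\Delta t$, and $f^{\Delta}\circ\sigma$ has no usable antiderivative (in general $(f\circ\sigma)^{\Delta}\neq f^{\Delta}\circ\sigma$), so Theorem~\ref{A1}(vi) does not apply. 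Worse, your route cannot generate the terms that actually appear in the statement. The $t$-independent term $\int_a^b\nu(s)f(\sigma(s))\Delta s$, multiplied by your factor and integrated in $t$, yields that same fixed number times $\int_a^b\big(f^{\Delta}(t)+f^{\Delta}(\sigma(t))\big)\Delta t$; no manipulation in the $t$-variable can turn the integrand $f(\sigma(s))$ of a fixed $s$-integral into $f(\sigma^{2}(s))$, nor produce the boundary values $f(\sigma(a))$, $f(\sigma(b))$. Those objects must be injected from somewhere else.

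The missing idea in the paper's proof is to invoke Lemma~\ref{Wlem1} \emph{twice}: once for $f$ and once for the shifted function $f\circ\sigma$. The second application is what supplies $f^{\Delta}(\sigma(s))$ inside the kernel integral, $f(\sigma^{2}(s))$ inside the weighted integral, and $f(\sigma(a))$, $f(\sigma(b))$ in the boundary term. Adding the two identities puts $f(t)+f(\sigma(t))$ on the left, and then one multiplies by $f^{\Delta}(t)$ alone, so that Theorem~\ref{pro} applies verbatim to give $(f^2)^{\Delta}(t)$; every remaining $t$-independent term simply acquires the factor $\int_a^b f^{\Delta}(t)\Delta t=f(b)-f(a)$, with no further integration by parts needed. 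The bound then follows from $|f^{\Delta}(t)|\le M$ and $|f^{\Delta}(s)+f^{\Delta}(\sigma(s))|\le M+N$, giving the same constant $M(N+M)$ you predicted. Your continuous-limit sanity check is sound, but it cannot detect this error precisely because on $\mathbb{T}=\mathbb{R}$ the two multipliers coincide.
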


\begin{proof}
From Lemma \ref{Wlem1}, we have
\begin{align}\label{eqn1}
\nonumber
 \dfrac{1+\psi(1-\lambda)-\psi(\lambda)}{2}f(t)& =\dfrac{1}{\int_a^b \nu(t)\Delta t}\int_a^b K(s,t)f^{\Delta}(s)\Delta s + \dfrac{1}{\int_a^b \nu(t)\Delta t}\int_a^b \nu(s)f(\sigma(s))\Delta s\\
& - \dfrac{\psi(\lambda)f(a)+\left(1-\psi(1-\lambda)\right)f(b)}{2},
\end{align}
and 
\begin{align}\label{eqn2}
\nonumber
 \dfrac{1+\psi(1-\lambda)-\psi(\lambda)}{2}f(\sigma(t))& =\dfrac{1}{\int_a^b \nu(t)\Delta t}\int_a^b K(s,t)f^{\Delta}(\sigma(s))\Delta s + \dfrac{1}{\int_a^b \nu(t)\Delta t}\int_a^b \nu(s)f(\sigma^2(s))\Delta s\\
& - \dfrac{\psi(\lambda)f(\sigma(a))+\left(1-\psi(1-\lambda)\right)f(\sigma(b))}{2}.
\end{align}
Adding Equations (\ref{eqn1}) and (\ref{eqn2}), we get
\begin{align}\label{eqn3}
\nonumber
 \dfrac{1+\psi(1-\lambda)-\psi(\lambda)}{2}\Big(f(t) &+ f(\sigma(t))\Big) = \dfrac{1}{\int_a^b \nu(t)\Delta t}\int_a^b K(s,t)\Big(f^{\Delta}(s) + f^{\Delta}(\sigma(s))\Big)\Delta s \\
\nonumber
&+ \dfrac{1}{\int_a^b \nu(t)\Delta t}\int_a^b \nu(s)\Big(f(\sigma(s))+ f(\sigma^2(s)) \Big)\Delta s\\
& - \dfrac{\psi(\lambda)\Big(f(a)+ f(\sigma(a))\Big) +\left(1-\psi(1-\lambda)\right)\Big(f(b)+f(\sigma(b))\Big)}{2}.
\end{align}
Multiplying (\ref{eqn3}) by $f^{\Delta}(t)$ and using Theorem \ref{pro} gives

\begin{align}\label{eqn4}
\nonumber
 \dfrac{1+\psi(1-\lambda)-\psi(\lambda)}{2}\big(f^2\big)^{\Delta}(t) &= \dfrac{1}{\int_a^b \nu(t)\Delta t} f^{\Delta}(t) \int_a^b K(s,t)\Big(f^{\Delta}(s) + f^{\Delta}(\sigma(s))\Big)\Delta s\\
\nonumber
&+ \dfrac{1}{\int_a^b \nu(t)\Delta t} f^{\Delta}(t) \int_a^b \nu(s)\Big(f(\sigma(s))+ f(\sigma^2(s)) \Big)\Delta s\\
& - \dfrac{\psi(\lambda)\Big(f(a)+ f(\sigma(a))\Big) +\left(1-\psi(1-\lambda)\right)\Big(f(b)+f(\sigma(b))\Big)}{2}f^{\Delta}(t).
\end{align}

Now, integrating (\ref{eqn4}) on $[a, b],$ we have

\begin{align}\label{eqn5}
\nonumber
 \dfrac{1+\psi(1-\lambda)-\psi(\lambda)}{2}&\Big(f^2(b)- f^2(a) \Big)= \dfrac{1}{\int_a^b \nu(t)\Delta t} \int_a^bf^{\Delta}(t) \Bigg[\int_a^b K(s,t)\Big(f^{\Delta}(s) + f^{\Delta}(\sigma(s))\Big)\Delta s\Bigg]\Delta t\\
\nonumber
&+ \dfrac{f(b) - f(a)}{\int_a^b \nu(t)\Delta t}\int_a^b \nu(s)\Big(f(\sigma(s))+ f(\sigma^2(s)) \Big)\Delta s\\
& - \dfrac{\psi(\lambda)\Big(f(a)+ f(\sigma(a))\Big) +\left(1-\psi(1-\lambda)\right)\Big(f(b)+f(\sigma(b))\Big)}{2}\Big(f(b)- f(a)\Big).
\end{align}

This implies

\begin{align}\label{eqn6}
\nonumber
 &\dfrac{1+\psi(1-\lambda)-\psi(\lambda)}{2}\Big(f^2(b)- f^2(a) \Big) -\dfrac{f(b) - f(a)}{\int_a^b \nu(t)\Delta t}\int_a^b \nu(s)\Big(f(\sigma(s))+ f(\sigma^2(s)) \Big)\Delta s\\
\nonumber
&+ \dfrac{\psi(\lambda)\Big(f(a)+ f(\sigma(a))\Big) +\left(1-\psi(1-\lambda)\right)\Big(f(b)+f(\sigma(b))\Big)}{2}\Big(f(b)- f(a)\Big)\\
&= \dfrac{1}{\int_a^b \nu(t)\Delta t} \int_a^bf^{\Delta}(t) \Bigg[\int_a^b K(s,t)\Big(f^{\Delta}(s) + f^{\Delta}(\sigma(s))\Big)\Delta s\Bigg]\Delta t.
\end{align}

Taking the absolute value of both sides of (\ref{eqn6}) and using item (v) of Theorem \ref{A1}, we get the desired result.
\end{proof}

\begin{cor}\label{cor1}
For $\mathbb{T}=\mathbb{R}$ in Theorem \ref{Trap} we get

\begin{align}
\nonumber
 &\Bigg|\dfrac{1+\psi(1-\lambda)-\psi(\lambda)}{4}\Big(f^2(b)- f^2(a) \Big) -\dfrac{f(b) - f(a)}{\int_a^b \nu(t) dt}\int_a^b \nu(s)f(s) ds\\
\nonumber
&+ \dfrac{\psi(\lambda)f(a) +\left(1-\psi(1-\lambda)\right)f(b)}{2}\Big(f(b)- f(a)\Big)\Bigg|\\
&\leq \dfrac{M^2}{\int_a^b \nu(t) dt} \int_a^b\Bigg(\int_a^b \big|K(s,t)\big| ds\Bigg) dt,
\end{align}
where $\nu(t)=w'(t)$ on $[a, b],$
\begin{equation}
K(s, t)=
\begin{cases}
w(s)-\left(w(a)+\psi(\lambda)\frac{w(b)-w(a)}{2}\right), ~~~~s\in[a, t),\\
w(s)-\left(w(a)+(1+\psi(1-\lambda))\frac{w(b)-w(a)}{2}\right), ~~~~s\in[t, b],
\end{cases}
\end{equation}
and 
$M=\sup\limits_{a<t<b}\Big|f'(t)\Big|<\infty.$ 

\end{cor}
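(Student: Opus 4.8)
The plan is to obtain Corollary \ref{cor1} as a direct specialization of Theorem \ref{Trap} to the classical time scale $\mathbb{T}=\mathbb{R}$, so no new estimate is needed: the entire content is the behaviour of the forward jump operator on $\mathbb{R}$. Since every point of $\mathbb{R}$ is dense, one has $\sigma(t)=t$ for all $t$, and hence $\sigma^2(t)=t$ as well. By the remark following the definition of the delta derivative, $f^{\Delta}(t)=f'(t)$, and by Definition \ref{def2} every delta integral $\int_a^b(\cdot)\,\Delta t$ reduces to the Riemann integral $\int_a^b(\cdot)\,dt$. The weighted kernel $K(s,t)$ in Theorem \ref{Trap} is defined purely in terms of $w$, $\psi$, $\lambda$ and the endpoints, so it is unchanged by the specialization and appears verbatim in the corollary.

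First I would substitute $\sigma=\mathrm{id}$ into the left-hand side of the inequality in Theorem \ref{Trap}. The three composite quantities carrying a $\sigma$ then collapse as
$$f(\sigma(s))+f(\sigma^2(s))=2f(s),\qquad f(a)+f(\sigma(a))=2f(a),\qquad f(b)+f(\sigma(b))=2f(b).$$
Inserting these, the middle term becomes $-\dfrac{f(b)-f(a)}{\int_a^b\nu\,dt}\int_a^b\nu(s)\cdot 2f(s)\,ds$, while the boundary term becomes $\big(\psi(\lambda)f(a)+(1-\psi(1-\lambda))f(b)\big)(f(b)-f(a))$; that is, each of these two terms picks up a common factor $2$ relative to the expressions displayed in the corollary, whereas the leading difference of squares $\tfrac{1+\psi(1-\lambda)-\psi(\lambda)}{2}\big(f^2(b)-f^2(a)\big)$ acquires no such factor because it contains no $\sigma$.

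For the right-hand side I would note that, since $\sigma(t)=t$, the two suprema coincide: $N=\sup_{a<t<b}|f^{\Delta}(\sigma(t))|=\sup_{a<t<b}|f'(t)|=M$, so the constant $M(N+M)$ equals $2M^2$. Dividing the whole specialized inequality through by $2$ then reproduces exactly the asserted form: the leading coefficient $\tfrac12$ becomes $\tfrac14$ (explaining the denominator $4$), the middle integral and the boundary term shed the factor $2$ they absorbed in the collapse above, and the right-hand constant $2M^2$ becomes $M^2$. There is no genuine analytic obstacle here; the only point requiring care is the bookkeeping of the factors of two, and in particular checking that the single term without a $\sigma$ is precisely what produces the $\tfrac14$ after the global halving.
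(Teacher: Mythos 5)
Your specialization is correct and is exactly the argument the paper intends (the paper states Corollary \ref{cor1} without proof, as an immediate consequence of Theorem \ref{Trap}): setting $\sigma=\mathrm{id}$ collapses the $\sigma$-terms, gives $N=M$, and dividing by $2$ yields the stated coefficients, including the $\tfrac14$. Your explicit bookkeeping of the factors of two is the only nontrivial point and you handle it correctly.
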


\begin{cor}\label{cor2}
For $w(t)=t,$ we have that $\nu(t)=1.$ Using this, the inequality in Theorem \ref{Trap} becomes

\begin{align}
\nonumber
 &\Bigg|\dfrac{1+\psi(1-\lambda)-\psi(\lambda)}{2}\Big(f^2(b)- f^2(a) \Big) -\dfrac{f(b) - f(a)}{b-a}\int_a^b \Big(f(\sigma(s))+ f(\sigma^2(s)) \Big)\Delta s\\
\nonumber
&+ \dfrac{\psi(\lambda)\Big(f(a)+ f(\sigma(a))\Big) +\left(1-\psi(1-\lambda)\right)\Big(f(b)+f(\sigma(b))\Big)}{2}\Big(f(b)- f(a)\Big)\Bigg|\\
\nonumber
&\leq \dfrac{M(N+M)}{b-a} \int_a^b\Bigg[h_2\left(a, a+\psi(\lambda)\frac{b-a}{2} \right) + h_2\left(t, a+\psi(\lambda)\frac{b-a}{2} \right) \\
&+ h_2\left(t, a+(1+\psi(1-\lambda))\frac{b-a}{2} \right)+  h_2\left(b, a+(1+\psi(1-\lambda))\frac{b-a}{2} \right)\Bigg]\Delta t,
\end{align}
for all $\lambda\in [0, 1]$ such that ~$a+\psi(\lambda)\frac{b-a}{2}$ ~and ~$a+(1+\psi(1-\lambda))\frac{b-a}{2}$ are in $\mathbb{T},$ and \\$t\in \left[a+\psi(\lambda)\frac{b-a}{2}, a+(1+\psi(1-\lambda))\frac{b-a}{2}\right].$
Here,
\begin{equation}
K(s, t)=
\begin{cases}
s-\left(a+\psi(\lambda)\frac{b-a}{2}\right), ~~~~s\in[a, t),\\
s-\left(a+(1+\psi(1-\lambda))\frac{b-a}{2}\right), ~~~~s\in[t, b],
\end{cases}
\end{equation}
$M=\sup\limits_{a<t<b}\Big|f^{\Delta}(t)\Big|<\infty ~~and~~ N=\sup\limits_{a<t<b}\Big|f^{\Delta}(\sigma(t))\Big|<\infty.$

\end{cor}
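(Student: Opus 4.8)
The plan is to obtain Corollary \ref{cor2} as a direct specialization of Theorem \ref{Trap} to the particular weight $w(t) = t$, for which $w^{\Delta}(t) = 1 = \nu(t)$ and hence $\int_a^b \nu(t)\Delta t = \int_a^b 1\,\Delta t = b - a$. Substituting this into the inequality of Theorem \ref{Trap} converts every occurrence of $\int_a^b \nu(t)\Delta t$ into $b-a$ and replaces $\nu(s)$ by $1$ inside the integral, which accounts for the entire left-hand side of the claimed inequality as well as the prefactor $\frac{M(N+M)}{b-a}$ on the right. With $w(s) = s$, $w(a) = a$, $w(b) = b$ and $w(b) - w(a) = b - a$, the kernel $K(s,t)$ reduces exactly to the piecewise-linear form stated in the corollary. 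Thus all the remaining content lies in evaluating the inner integral $\int_a^b |K(s,t)|\,\Delta s$ and showing it equals the stated four-term sum of $h_2$ values.

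To that end I would abbreviate the two breakpoints $\alpha := a + \psi(\lambda)\frac{b-a}{2}$ and $\beta := a + (1+\psi(1-\lambda))\frac{b-a}{2}$, so that $K(s,t) = s - \alpha$ on $[a,t)$ and $K(s,t) = s - \beta$ on $[t,b]$. Splitting at $t$ via item (iv) of Theorem \ref{A1} gives
\[
\int_a^b |K(s,t)|\,\Delta s = \int_a^t |s-\alpha|\,\Delta s + \int_t^b |s-\beta|\,\Delta s.
\]
The crucial observation is that $\psi$ maps into $[0,1]$, so $\psi(\lambda), \psi(1-\lambda) \in [0,1]$ force $\alpha \in [a, \frac{a+b}{2}]$ and $\beta \in [\frac{a+b}{2}, b]$; combined with the hypothesis $t \in [\alpha, \beta]$ and the assumption that $\alpha, \beta \in \mathbb{T}$, this locates $\alpha$ inside $[a,t]$ and $\beta$ inside $[t,b]$. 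Consequently each absolute value changes sign at an interior, in-$\mathbb{T}$ point, and I can split once more at $\alpha$ and at $\beta$.

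Each of the four resulting integrals is then an $h_2$ value read off directly from the recursive definition $h_2(t,s) = \int_s^t h_1(\tau,s)\,\Delta\tau = \int_s^t (\tau - s)\,\Delta\tau$. For example, $\int_\alpha^t (s-\alpha)\,\Delta s = h_2(t,\alpha)$ immediately, while $\int_a^\alpha (\alpha - s)\,\Delta s = -\int_a^\alpha (s-\alpha)\,\Delta s = \int_\alpha^a (s - \alpha)\,\Delta s = h_2(a,\alpha)$ by the sign rule in item (iii) of Theorem \ref{A1}. Treating the second integral identically yields $\int_t^b |s-\beta|\,\Delta s = h_2(t,\beta) + h_2(b,\beta)$, and adding the four terms produces exactly the bracketed expression in the corollary.

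The step I expect to be the main, if mild, obstacle is the bookkeeping with the time-scale sign conventions: one must track that $h_2$ is defined with its arguments in the order $h_2(\text{upper},\text{lower})$, so that the wrong-orientation integrals such as $\int_a^\alpha$ are correctly re-expressed as $h_2(a,\alpha)$ rather than $h_2(\alpha,a)$, using item (iii). Everything else is a routine substitution into Theorem \ref{Trap} together with the positivity constraints on $\psi$ that guarantee the breakpoints lie in the required subintervals.
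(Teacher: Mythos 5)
Your proposal is correct and follows essentially the same route as the paper: substitute $w(t)=t$ into Theorem \ref{Trap}, split $\int_a^b|K(s,t)|\Delta s$ at $t$ and then again at the two breakpoints $a+\psi(\lambda)\frac{b-a}{2}$ and $a+(1+\psi(1-\lambda))\frac{b-a}{2}$, resolve the absolute values by reversing the orientation of the negatively signed integrals via Theorem \ref{A1}(iii), and identify each piece as an $h_2$ value from the recursive definition. Your explicit remark that $\psi(\lambda),\psi(1-\lambda)\in[0,1]$ places the breakpoints in the correct subintervals is a small clarification the paper leaves implicit, but the argument is otherwise identical.
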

\begin{proof}
Here, we only need to justify the right hand side of the inequality. We proceed as follows.
\begin{align*}
\int_a^b |K(s, t)|\Delta s &= \int_a^t |K(s, t)|\Delta s +\int_t^b |K(s, t)|\Delta s\\
&= \int_a^t \left|s-\left(a+\psi(\lambda)\frac{b-a}{2}\right)\right|\Delta s + \int_t^b \left|s-\left(a+(1+\psi(1-\lambda))\frac{b-a}{2}\right)\right|\Delta s\\
&=\int_a^{a+\psi(\lambda)\frac{b-a}{2}} \left|s-\left(a+\psi(\lambda)\frac{b-a}{2}\right)\right|\Delta s + \int_{a+\psi(\lambda)\frac{b-a}{2}}^t \left|s-\left(a+\psi(\lambda)\frac{b-a}{2}\right)\right|\Delta s\\
&+\int_t^{a+(1+\psi(1-\lambda))\frac{b-a}{2}} \left|s-\left(a+(1+\psi(1-\lambda))\frac{b-a}{2}\right)\right|\Delta s\\
&+ \int_{a+(1+\psi(1-\lambda))\frac{b-a}{2}}^b \left|s-\left(a+(1+\psi(1-\lambda))\frac{b-a}{2}\right)\right|\Delta s\\
&=\int_{a+\psi(\lambda)\frac{b-a}{2}}^a \left[s-\left(a+\psi(\lambda)\frac{b-a}{2}\right)\right]\Delta s +  \int_{a+\psi(\lambda)\frac{b-a}{2}}^t \left[s-\left(a+\psi(\lambda)\frac{b-a}{2}\right)\right]\Delta s\\
&+\int_{a+(1+\psi(1-\lambda))\frac{b-a}{2}}^t \left[s-\left(a+(1+\psi(1-\lambda))\frac{b-a}{2}\right)\right]\Delta s\\
&+ \int_{a+(1+\psi(1-\lambda))\frac{b-a}{2}}^b \left[s-\left(a+(1+\psi(1-\lambda))\frac{b-a}{2}\right)\right]\Delta s\\
&=h_2\left(a, a+\psi(\lambda)\frac{b-a}{2} \right) + h_2\left(t, a+\psi(\lambda)\frac{b-a}{2} \right) + h_2\left(t, a+(1+\psi(1-\lambda))\frac{b-a}{2} \right)\\
&+  h_2\left(b, a+(1+\psi(1-\lambda))\frac{b-a}{2} \right).
\end{align*}
Hence, the result follows.
\end{proof}

\begin{cor}\label{cor3}
Taking $\psi(\lambda)=\lambda,$ in Corollary \ref{cor2} above yields

\begin{align}
\nonumber
 &\Bigg|(1-\lambda)\Big(f^2(b)- f^2(a) \Big) -\dfrac{f(b) - f(a)}{b-a}\int_a^b \Big(f(\sigma(s))+ f(\sigma^2(s)) \Big)\Delta s\\
\nonumber
&+ \dfrac{\lambda\Big(f(a)+ f(b) + f(\sigma(a)) + f(\sigma(b))\Big)}{2}\Big(f(b)- f(a)\Big)\Bigg|\\
\nonumber
&\leq \dfrac{M(N+M)}{b-a} \int_a^b\Bigg[h_2\left(a, a+\lambda\frac{b-a}{2} \right) + h_2\left(t, a+\lambda\frac{b-a}{2} \right) \\
&+ h_2\left(t, a+(2 - \lambda)\frac{b-a}{2} \right)+  h_2\left(b, a+(2 - \lambda)\frac{b-a}{2} \right)\Bigg]\Delta t,
\end{align}
for all $\lambda\in [0, 1]$ such that ~$a+\lambda\frac{b-a}{2}$ ~and ~$a+(2-\lambda)\frac{b-a}{2}$ are in $\mathbb{T},$ and \\$t\in \left[a+\lambda\frac{b-a}{2}, a+(2-\lambda)\frac{b-a}{2}\right].$
Here,
\begin{equation}
K(s, t)=
\begin{cases}
s-\left(a+\lambda\frac{b-a}{2}\right), ~~~~s\in[a, t),\\
s-\left(a+(2 - \lambda)\frac{b-a}{2}\right), ~~~~s\in[t, b],
\end{cases}
\end{equation}
$M=\sup\limits_{a<t<b}\Big|f^{\Delta}(t)\Big|<\infty ~~and~~ N=\sup\limits_{a<t<b}\Big|f^{\Delta}(\sigma(t))\Big|<\infty.$

\end{cor}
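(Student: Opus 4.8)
The plan is to obtain Corollary \ref{cor3} as a direct specialization of Corollary \ref{cor2}, taking the parameter function to be the identity, i.e. $\psi(\lambda)=\lambda$ on $[0,1]$. No new integration or estimation is required, since the inequality of Corollary \ref{cor2} has already been established; all that remains is to substitute this choice into every occurrence of $\psi$ and to simplify the resulting coefficients. The single observation that makes the substitution clean is that, when $\psi$ is the identity map, $\psi(1-\lambda)=1-\lambda$, so the two parameter-dependent weights appearing in Corollary \ref{cor2} both collapse to $\lambda$.

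First I would simplify the leading coefficient. Under $\psi(\lambda)=\lambda$ we have $\frac{1+\psi(1-\lambda)-\psi(\lambda)}{2}=\frac{1+(1-\lambda)-\lambda}{2}=1-\lambda$, which turns the first term $\frac{1+\psi(1-\lambda)-\psi(\lambda)}{2}\big(f^2(b)-f^2(a)\big)$ into $(1-\lambda)\big(f^2(b)-f^2(a)\big)$. Next I would treat the boundary block. The coefficient of $f(a)+f(\sigma(a))$ is $\psi(\lambda)=\lambda$, while the coefficient of $f(b)+f(\sigma(b))$ is $1-\psi(1-\lambda)=1-(1-\lambda)=\lambda$. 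Since these two coefficients now agree, I can factor $\lambda$ out of the whole block and combine the four boundary evaluations into the single symmetric expression $\frac{\lambda\big(f(a)+f(b)+f(\sigma(a))+f(\sigma(b))\big)}{2}$, exactly as displayed in Corollary \ref{cor3}. The middle term $-\frac{f(b)-f(a)}{b-a}\int_a^b\big(f(\sigma(s))+f(\sigma^2(s))\big)\Delta s$ contains no $\psi$ and is carried over unchanged.

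Finally I would push the substitution through the kernel and the four $h_2$ terms on the right-hand side. The two shift points become $a+\psi(\lambda)\frac{b-a}{2}=a+\lambda\frac{b-a}{2}$ and $a+(1+\psi(1-\lambda))\frac{b-a}{2}=a+(2-\lambda)\frac{b-a}{2}$, so each argument appearing inside $K(s,t)$ and inside the $h_2$ functions is replaced accordingly, and the admissibility conditions (that the shift points lie in $\mathbb{T}$, and the range $t\in[a+\lambda\frac{b-a}{2},\,a+(2-\lambda)\frac{b-a}{2}]$) are rewritten in the same way.

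I do not anticipate any genuine obstacle, since the statement is purely a change of the parameter function. The only point demanding a little care is the bookkeeping in the boundary block: one must notice that the identity $\psi(1-\lambda)=1-\lambda$ forces the front and back coefficients to coincide, which is precisely what permits the symmetric four-term combination of Corollary \ref{cor3} in place of the asymmetric weighting of Corollary \ref{cor2}.
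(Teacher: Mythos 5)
Your proposal is correct and matches the paper's (implicit) argument exactly: the paper states Corollary \ref{cor3} as a direct specialization of Corollary \ref{cor2} with $\psi(\lambda)=\lambda$, and your coefficient simplifications $\frac{1+(1-\lambda)-\lambda}{2}=1-\lambda$, $1-\psi(1-\lambda)=\lambda$, and $1+\psi(1-\lambda)=2-\lambda$ are precisely what is needed. Nothing further is required.
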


\begin{remark}
If we take $\lambda=0$ and $\mathbb{T}=\mathbb{R},$ Corollary \ref{cor3} reduces to Theorem \ref{Pach1}.
\end{remark}

\begin{cor}\label{cor4}
For the case when $\mathbb{T}=\mathbb{Z},$  Theorem \ref{Trap} becomes

\begin{align}
\nonumber
 &\Bigg|\dfrac{1+\psi(1-\lambda)-\psi(\lambda)}{2}\Big(f^2(b)- f^2(a) \Big) -\dfrac{f(b) - f(a)}{\sum\limits_{s=a}^{b-1} \nu(s)}\sum_{s=a}^{b-1} \nu(s)\Big(f(s+1)+ f(s+2) \Big)\\
\nonumber
&+ \dfrac{\psi(\lambda)\Big(f(a)+ f(a+1)\Big) +\left(1-\psi(1-\lambda)\right)\Big(f(b)+f(b+1)\Big)}{2}\Big(f(b)- f(a)\Big)\Bigg|\\
&\leq \dfrac{M(N+M)}{\sum\limits_{s=a}^{b-1} \nu(s)} \sum\limits_{t=a}^{b-1}\Bigg(\sum\limits_{s=a}^{b-1} \big|K(s,t)\big|\Bigg),
\end{align}
where $\nu(t)=w(t+1)-w(t)$ on $[a, b],$
\begin{equation}
K(s, t)=
\begin{cases}
w(s)-\left(w(a)+\psi(\lambda)\frac{w(b)-w(a)}{2}\right), ~~~~s\in[a, t),\\
w(s)-\left(w(a)+(1+\psi(1-\lambda))\frac{w(b)-w(a)}{2}\right), ~~~~s\in[t, b],
\end{cases}
\end{equation}
$M=\sup\limits_{a<t<b-1}\Big|\Delta f(t)\Big|<\infty ~~and~~ N=\sup\limits_{a<t<b-1}\Big|\Delta f(t+1)\Big|<\infty.$
\end{cor}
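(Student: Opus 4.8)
The plan is to obtain Corollary~\ref{cor4} as a direct specialization of Theorem~\ref{Trap} to the time scale $\mathbb{T}=\mathbb{Z}$, translating each time-scale object into its discrete counterpart. First I would invoke the standard dictionary for $\mathbb{T}=\mathbb{Z}$: the forward jump operator is $\sigma(t)=t+1$, so that $\sigma^2(t)=t+2$; the delta derivative is the forward difference $f^{\Delta}(t)=f(t+1)-f(t)=\Delta f(t)$, as recorded just after the definition of the delta derivative; and the delta integral reduces to a finite sum, $\int_a^b f(t)\,\Delta t=\sum_{t=a}^{b-1}f(t)$, which follows from Definition~\ref{def2} by telescoping the relation $g(t+1)-g(t)=f(t)$.

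Next I would substitute these identities into the inequality of Theorem~\ref{Trap} term by term. The relation $w^{\Delta}(t)=\nu(t)$ becomes $w(t+1)-w(t)=\nu(t)$, and the normalizing factor $\int_a^b\nu(t)\,\Delta t$ becomes $\sum_{s=a}^{b-1}\nu(s)$. In the main expression the boundary terms transform via $f(\sigma(a))=f(a+1)$ and $f(\sigma(b))=f(b+1)$, while the weighted integral $\int_a^b\nu(s)\big(f(\sigma(s))+f(\sigma^2(s))\big)\,\Delta s$ becomes the sum $\sum_{s=a}^{b-1}\nu(s)\big(f(s+1)+f(s+2)\big)$. Since the kernel $K(s,t)$ is written purely in terms of $w$, $\psi$, and $\lambda$, its functional form is left unchanged, and the double delta integral on the right-hand side collapses to the double sum $\sum_{t=a}^{b-1}\big(\sum_{s=a}^{b-1}|K(s,t)|\big)$.

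Finally I would address the constants $M$ and $N$. On $\mathbb{T}=\mathbb{Z}$ the point $b$ is a left-scattered maximum, so $f^{\Delta}$ is only defined on $\mathbb{T}^k=[a,b-1]\cap\mathbb{Z}$; hence the suprema $\sup_{a<t<b}|f^{\Delta}(t)|$ and $\sup_{a<t<b}|f^{\Delta}(\sigma(t))|$ must be restricted to the admissible discrete indices and read $\sup_{a<t<b-1}|\Delta f(t)|$ and $\sup_{a<t<b-1}|\Delta f(t+1)|$, respectively, following the same open-interval convention as in Theorem~\ref{Trap}. Assembling all of these substitutions yields exactly the claimed inequality.

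The step I expect to require the most care—though it is bookkeeping rather than a genuine obstacle—is keeping the summation indices consistent: every delta integral over $[a,b]$ must terminate at $b-1$, and one must not inadvertently shift the $s$-range of the kernel when passing from $\int$ to $\sum$. There is no analytic difficulty here, since all the real work (the Montgomery-type identity of Lemma~\ref{Wlem1}, the product rule of Theorem~\ref{pro}, and the absolute-value estimate via Theorem~\ref{A1}(v)) has already been carried out in the proof of Theorem~\ref{Trap}, and Corollary~\ref{cor4} merely reads off its discrete instance.
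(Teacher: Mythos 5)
Your proposal is correct and matches the paper's (implicit) treatment: the paper states Corollary~\ref{cor4} without a separate proof, as a direct specialization of Theorem~\ref{Trap} obtained by the standard dictionary $\sigma(t)=t+1$, $f^{\Delta}(t)=\Delta f(t)$, $\int_a^b f(t)\,\Delta t=\sum_{t=a}^{b-1}f(t)$, which is exactly the substitution you carry out. Your added remark on restricting the suprema to $a<t<b-1$ via $\mathbb{T}^k$ is a sensible justification of the paper's stated constants.
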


\subsection{A weighted Gr\"uss type inequality on time scales}

\begin{thm}\label{Gruss}
Let $\nu:[a, b]\rightarrow [0,\infty)$ be continuous and positive and $w:[a, b]\rightarrow \mathbb{R}$ be differentiable such that $w^{\Delta}(t )=\nu(t)$ on $[a, b].$ Suppose also that $a, b, s, t\in \mathbb{T},$ ~$a<b,$~$p, q:[a, b]\rightarrow \mathbb{R}$ is differentiable, and $\psi$ is a function of $[0, 1]$ into $[0, 1].$ Then we have the following inequality

\begin{align}
\nonumber
\Bigg| \Big(1+&\psi(1-\lambda)-\psi(\lambda)\Big)\Bigg(\int_a^b\nu(t)\Delta t\Bigg)\Bigg(\int_a^bp(t)q(t)\Delta t\Bigg)\\
\nonumber
&+\dfrac{\psi(\lambda)p(a)+\left(1-\psi(1-\lambda)\right)p(b)}{2}\Bigg(\int_a^b\nu(t)\Delta t\Bigg)\Bigg(\int_a^bq(t)\Delta t\Bigg)\\
\nonumber
&+\dfrac{\psi(\lambda)q(a)+\left(1-\psi(1-\lambda)\right)q(b)}{2}\Bigg(\int_a^b\nu(t)\Delta t\Bigg)\Bigg(\int_a^bp(t)\Delta t\Bigg)\\
\nonumber
 &- \int_a^bq(t)\Bigg(\int_a^b \nu(s)p(\sigma(s))\Delta s\Bigg)\Delta t - \int_a^bp(t)\Bigg(\int_a^b \nu(s)q(\sigma(s))\Delta s\Bigg)\Delta t\Bigg|\\
&\leq \int_a^b\Big(P|q(t)| + Q|p(t)|\Big)\Bigg(\int_a^b \big|K(s,t)\big|\Delta s\Bigg)\Delta t,
\end{align}
where
\begin{equation}
K(s, t)=
\begin{cases}
w(s)-\left(w(a)+\psi(\lambda)\frac{w(b)-w(a)}{2}\right), ~~~~s\in[a, t),\\
w(s)-\left(w(a)+(1+\psi(1-\lambda))\frac{w(b)-w(a)}{2}\right), ~~~~s\in[t, b],
\end{cases}
\end{equation}
$P=\sup\limits_{a<t<b}\Big|p^{\Delta}(t)\Big|<\infty ~~and~~ Q=\sup\limits_{a<t<b}\Big|q^{\Delta}(t)\Big|<\infty.$

\end{thm}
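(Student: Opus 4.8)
The plan is to mimic the proof of Theorem~\ref{Trap} but to exploit the bilinear structure that a Gr\"uss-type estimate demands. First I would apply the weighted Montgomery identity of Lemma~\ref{Wlem1} twice: once taking the differentiable function to be $p$, and once taking it to be $q$. Writing $C=\frac{1+\psi(1-\lambda)-\psi(\lambda)}{2}$ and abbreviating $I=\int_a^b\nu(t)\Delta t$, these two instances read
\begin{align*}
\Big[C\,p(t)+\tfrac{\psi(\lambda)p(a)+(1-\psi(1-\lambda))p(b)}{2}\Big]I &=\int_a^b K(s,t)p^{\Delta}(s)\Delta s+\int_a^b\nu(s)p(\sigma(s))\Delta s,\\
\Big[C\,q(t)+\tfrac{\psi(\lambda)q(a)+(1-\psi(1-\lambda))q(b)}{2}\Big]I &=\int_a^b K(s,t)q^{\Delta}(s)\Delta s+\int_a^b\nu(s)q(\sigma(s))\Delta s.
\end{align*}
In contrast to the trapezoid proof I would \emph{not} divide through by $I$; retaining this factor is exactly what produces the $\int_a^b\nu(t)\Delta t$ weights in the first three terms of the statement and explains why no $1/I$ appears in the final bound.

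Next I would multiply the first identity by $q(t)$, the second by $p(t)$, and add. The two $C$-terms combine into $2C\,p(t)q(t)=(1+\psi(1-\lambda)-\psi(\lambda))p(t)q(t)$, so the left-hand side becomes
$$I\Big[(1+\psi(1-\lambda)-\psi(\lambda))p(t)q(t)+\tfrac{\psi(\lambda)p(a)+(1-\psi(1-\lambda))p(b)}{2}q(t)+\tfrac{\psi(\lambda)q(a)+(1-\psi(1-\lambda))q(b)}{2}p(t)\Big],$$
while the right-hand side is $q(t)\int_a^b K(s,t)p^{\Delta}(s)\Delta s+p(t)\int_a^b K(s,t)q^{\Delta}(s)\Delta s$ together with the two $\nu$-weighted inner integrals. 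I would then integrate this equation in $t$ over $[a,b]$, using the linearity items (i)--(ii) and additivity of Theorem~\ref{A1}. Integrating the left side term by term reproduces precisely the first three summands of the bracketed expression in the statement, and transposing the two $\nu$-weighted double integrals to the left supplies the two subtracted terms. Thus the whole bracketed expression equals
$$\int_a^b q(t)\Big(\int_a^b K(s,t)p^{\Delta}(s)\Delta s\Big)\Delta t+\int_a^b p(t)\Big(\int_a^b K(s,t)q^{\Delta}(s)\Delta s\Big)\Delta t.$$

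Finally I would take absolute values and apply item (v) of Theorem~\ref{A1} to pass the modulus inside both the outer and the inner integrals, then bound $|p^{\Delta}(s)|\le P$ and $|q^{\Delta}(s)|\le Q$. Pulling $P$ and $Q$ out of the inner integrals and recombining over the common kernel $\int_a^b|K(s,t)|\Delta s$ yields exactly $\int_a^b\big(P|q(t)|+Q|p(t)|\big)\big(\int_a^b|K(s,t)|\Delta s\big)\Delta t$, as claimed. I anticipate no genuine analytic obstacle: unlike Theorem~\ref{Trap}, this argument does not even invoke the product rule (Theorem~\ref{pro}), since one multiplies by the function values $p(t),q(t)$ rather than by a derivative. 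The only point demanding care is the bookkeeping of the symmetric combination and the deliberate choice to keep the factor $I=\int_a^b\nu(t)\Delta t$ throughout, so that the $\nu$-weights appear on the three leading terms and the absence of a $1/I$ factor in the bound come out correctly matched to the statement.
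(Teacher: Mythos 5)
Your proposal is correct and follows essentially the same route as the paper: apply Lemma~\ref{Wlem1} to $p$ and $q$, cross-multiply by $q(t)$ and $p(t)$, add, integrate in $t$, rearrange, and estimate via item (v) of Theorem~\ref{A1} with $|p^{\Delta}|\le P$, $|q^{\Delta}|\le Q$. The only cosmetic difference is that you retain the factor $\int_a^b\nu(t)\Delta t$ throughout, whereas the paper divides by it first and multiplies back before taking absolute values; the resulting identity and bound are identical.
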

\begin{proof}
Applying Lemma \ref{Wlem1} to the differentiable functions $p$ and $q$, we obtain

\begin{align}\label{eqn11}
\nonumber
 \dfrac{1+\psi(1-\lambda)-\psi(\lambda)}{2}p(t)& =\dfrac{1}{\int_a^b \nu(t)\Delta t}\int_a^b K(s,t)p^{\Delta}(s)\Delta s + \dfrac{1}{\int_a^b \nu(t)\Delta t}\int_a^b \nu(s)p(\sigma(s))\Delta s\\
& - \dfrac{\psi(\lambda)p(a)+\left(1-\psi(1-\lambda)\right)p(b)}{2},
\end{align}
and 
\begin{align}\label{eqn12}
\nonumber
 \dfrac{1+\psi(1-\lambda)-\psi(\lambda)}{2}q(t)& =\dfrac{1}{\int_a^b \nu(t)\Delta t}\int_a^b K(s,t)q^{\Delta}(s)\Delta s + \dfrac{1}{\int_a^b \nu(t)\Delta t}\int_a^b \nu(s)q(\sigma(s))\Delta s\\
& - \dfrac{\psi(\lambda)q(a)+\left(1-\psi(1-\lambda)\right)q(b)}{2}.
\end{align}
Multiplying (\ref{eqn11}) by $q(t)$ and (\ref{eqn12}) by $p(t)$ and then adding the resulting identity gives
\begin{align}\label{eqn13}
\nonumber
 \Big(1+\psi(1-\lambda)&-\psi(\lambda)\Big)p(t)q(t)=\dfrac{1}{\int_a^b\nu(t)\Delta t}\Bigg[q(t)\int_a^b K(s,t)p^{\Delta}(s)\Delta s +p(t)\int_a^b K(s,t)q^{\Delta}(s)\Delta s \Bigg]\\
\nonumber
 &+ \dfrac{1}{\int_a^b \nu(t)\Delta t}\Bigg[q(t)\int_a^b \nu(s)p(\sigma(s))\Delta s + p(t)\int_a^b \nu(s)q(\sigma(s))\Delta s\Bigg] \\
 &- \dfrac{\psi(\lambda)p(a)+\left(1-\psi(1-\lambda)\right)p(b)}{2}q(t) -  \dfrac{\psi(\lambda)q(a)+\left(1-\psi(1-\lambda)\right)q(b)}{2}p(t).
\end{align}
Now integrating (\ref{eqn13}) on $[a, b]$ amounts to

\begin{align}\label{eqn14}
\nonumber
 \Big(1+&\psi(1-\lambda)-\psi(\lambda)\Big)\int_a^bp(t)q(t)\Delta t\\
\nonumber
&=\dfrac{1}{\int_a^b\nu(t)\Delta t}\Bigg[\int_a^bq(t)\Bigg(\int_a^b K(s,t)p^{\Delta}(s)\Delta s\Bigg)\Delta t+\int_a^bp(t)\Bigg(\int_a^b K(s,t)q^{\Delta}(s)\Delta s\Bigg)\Delta t \Bigg]\\
\nonumber
 &+ \dfrac{1}{\int_a^b \nu(t)\Delta t}\Bigg[\int_a^bq(t)\Bigg(\int_a^b \nu(s)p(\sigma(s))\Delta s\Bigg)\Delta t + \int_a^bp(t)\Bigg(\int_a^b \nu(s)q(\sigma(s))\Delta s\Bigg)\Delta t\Bigg] \\
 &- \dfrac{\psi(\lambda)p(a)+\left(1-\psi(1-\lambda)\right)p(b)}{2}\int_a^bq(t)\Delta t -  \dfrac{\psi(\lambda)q(a)+\left(1-\psi(1-\lambda)\right)q(b)}{2}\int_a^bp(t)\Delta t.
\end{align}
This implies that

\begin{align}\label{eqn15}
\nonumber
 \Big(1+&\psi(1-\lambda)-\psi(\lambda)\Big)\Bigg(\int_a^b\nu(t)\Delta t\Bigg)\Bigg(\int_a^bp(t)q(t)\Delta t\Bigg)\\
\nonumber
&=\int_a^bq(t)\Bigg(\int_a^b K(s,t)p^{\Delta}(s)\Delta s\Bigg)\Delta t+\int_a^bp(t)\Bigg(\int_a^b K(s,t)q^{\Delta}(s)\Delta s\Bigg)\Delta t \\
\nonumber
 &+ \int_a^bq(t)\Bigg(\int_a^b \nu(s)p(\sigma(s))\Delta s\Bigg)\Delta t + \int_a^bp(t)\Bigg(\int_a^b \nu(s)q(\sigma(s))\Delta s\Bigg)\Delta t\\
\nonumber
 &- \dfrac{\psi(\lambda)p(a)+\left(1-\psi(1-\lambda)\right)p(b)}{2}\Bigg(\int_a^b\nu(t)\Delta t\Bigg)\Bigg(\int_a^bq(t)\Delta t\Bigg)\\
&- \dfrac{\psi(\lambda)q(a)+\left(1-\psi(1-\lambda)\right)q(b)}{2}\Bigg(\int_a^b\nu(t)\Delta t\Bigg)\Bigg(\int_a^bp(t)\Delta t\Bigg).
\end{align}
Rearranging, taking absolute value and using item (v) of Theorem \ref{A1} yields

\begin{align}\label{eqn16}
\nonumber
\Bigg| \Big(1+&\psi(1-\lambda)-\psi(\lambda)\Big)\Bigg(\int_a^b\nu(t)\Delta t\Bigg)\Bigg(\int_a^bp(t)q(t)\Delta t\Bigg)\\
\nonumber
&+\dfrac{\psi(\lambda)p(a)+\left(1-\psi(1-\lambda)\right)p(b)}{2}\Bigg(\int_a^b\nu(t)\Delta t\Bigg)\Bigg(\int_a^bq(t)\Delta t\Bigg)\\
\nonumber
&+\dfrac{\psi(\lambda)q(a)+\left(1-\psi(1-\lambda)\right)q(b)}{2}\Bigg(\int_a^b\nu(t)\Delta t\Bigg)\Bigg(\int_a^bp(t)\Delta t\Bigg)\\
\nonumber
 &- \int_a^bq(t)\Bigg(\int_a^b \nu(s)p(\sigma(s))\Delta s\Bigg)\Delta t - \int_a^bp(t)\Bigg(\int_a^b \nu(s)q(\sigma(s))\Delta s\Bigg)\Delta t\Bigg|\\
&\leq \int_a^b\Bigg|q(t)\Bigg(\int_a^b K(s,t)p^{\Delta}(s)\Delta s\Bigg)+p(t)\Bigg(\int_a^b K(s,t)q^{\Delta}(s)\Delta s\Bigg)\Bigg|\Delta t.
\end{align}
Hence, the result follows.
\end{proof}

\begin{cor}\label{cor17}
For the case when $\mathbb{T}=\mathbb{R},$  Theorem \ref{Gruss} becomes
\begin{align}
\nonumber
\Bigg| \Big(1+&\psi(1-\lambda)-\psi(\lambda)\Big)\Bigg(\int_a^b\nu(t) dt\Bigg)\Bigg(\int_a^bp(t)q(t) dt\Bigg)\\
\nonumber
&+\dfrac{\psi(\lambda)p(a)+\left(1-\psi(1-\lambda)\right)p(b)}{2}\Bigg(\int_a^b\nu(t) dt\Bigg)\Bigg(\int_a^bq(t) dt\Bigg)\\
\nonumber
&+\dfrac{\psi(\lambda)q(a)+\left(1-\psi(1-\lambda)\right)q(b)}{2}\Bigg(\int_a^b\nu(t) dt\Bigg)\Bigg(\int_a^bp(t) dt\Bigg)\\
\nonumber
 &- \int_a^bq(t)\Bigg(\int_a^b \nu(s)p(s) ds\Bigg) dt - \int_a^bp(t)\Bigg(\int_a^b \nu(s)q(s) ds\Bigg) dt\Bigg|\\
&\leq \int_a^b\Big(P|q(t)| + Q|p(t)|\Big)\Bigg(\int_a^b \big|K(s,t)\big| ds\Bigg) dt,
\end{align}
where $\nu(t)=w'(t)$ on $[a, b],$
\begin{equation}
K(s, t)=
\begin{cases}
w(s)-\left(w(a)+\psi(\lambda)\frac{w(b)-w(a)}{2}\right), ~~~~s\in[a, t),\\
w(s)-\left(w(a)+(1+\psi(1-\lambda))\frac{w(b)-w(a)}{2}\right), ~~~~s\in[t, b],
\end{cases}
\end{equation}
$P=\sup\limits_{a<t<b}\Big|p'(t)\Big|<\infty ~~and~~ Q=\sup\limits_{a<t<b}\Big|q'(t)\Big|<\infty.$
\end{cor}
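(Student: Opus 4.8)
The plan is to derive Corollary \ref{cor17} directly from Theorem \ref{Gruss} by specializing the underlying time scale to $\mathbb{T}=\mathbb{R}$; no fresh estimate is needed, since the analytic content already resides in the proof of Theorem \ref{Gruss}.

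First I would record the effect of taking $\mathbb{T}=\mathbb{R}$. Every point of $[a,b]$ is then dense, so the forward jump operator is the identity, $\sigma(t)=t$ for all $t$. As recorded after the definition of the delta derivative, this forces $f^{\Delta}(t)=f'(t)$; applying this to $p$, $q$ and $w$ gives $p^{\Delta}=p'$, $q^{\Delta}=q'$ and $\nu=w^{\Delta}=w'$. Moreover the delta integral collapses to the ordinary Riemann integral, so that $\int_a^b(\cdot)\,\Delta t$ is replaced throughout by $\int_a^b(\cdot)\,dt$.

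Next I would substitute these reductions into the inequality of Theorem \ref{Gruss}. On the left-hand side the only terms that are not purely formal are those involving the shift $\sigma$; since $\sigma(s)=s$ we have $p(\sigma(s))=p(s)$ and $q(\sigma(s))=q(s)$, so the two mixed terms $\int_a^b q(t)\big(\int_a^b\nu(s)p(\sigma(s))\,\Delta s\big)\Delta t$ and $\int_a^b p(t)\big(\int_a^b\nu(s)q(\sigma(s))\,\Delta s\big)\Delta t$ pass over exactly to the corresponding Riemann-integral terms in the statement of Corollary \ref{cor17}. The remaining products of integrals and the boundary coefficients depending on $\psi(\lambda)$, $\psi(1-\lambda)$, $p(a)$, $p(b)$, $q(a)$, $q(b)$ are unaffected by the choice of time scale and so carry over verbatim.

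On the right-hand side the kernel $K(s,t)$ is defined purely in terms of $w$ and the fixed constants $a$, $b$, $\psi(\lambda)$, $\psi(1-\lambda)$, and hence keeps its shape; only the inner delta integral $\int_a^b|K(s,t)|\,\Delta s$ and the outer delta integral become Riemann integrals. Finally the suprema defining $P$ and $Q$ become $\sup_{a<t<b}|p'(t)|$ and $\sup_{a<t<b}|q'(t)|$, matching the statement. The main obstacle is nothing more than careful bookkeeping: one must check that each occurrence of $\Delta$ and of $\sigma$ is correctly translated and that no term is silently lost in passing to the continuous case. I expect no genuine analytic difficulty, because the bound $P|q(t)|+Q|p(t)|$ was already produced in the last step of the proof of Theorem \ref{Gruss}.
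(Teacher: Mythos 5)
Your proposal is correct and coincides with the paper's treatment: the paper offers no separate proof of Corollary \ref{cor17}, presenting it as the immediate specialization of Theorem \ref{Gruss} obtained by setting $\sigma(t)=t$, replacing delta derivatives by ordinary derivatives, and replacing delta integrals by Riemann integrals. Your bookkeeping of each occurrence of $\sigma$ and $\Delta$ is exactly the intended argument.
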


\begin{cor}\label{cor18}
For the case when $\mathbb{T}=\mathbb{Z},$  Theorem \ref{Gruss} amounts to
\begin{align}
\nonumber
\Bigg| \Big(1+&\psi(1-\lambda)-\psi(\lambda)\Big)\Bigg(\sum\limits_{t=a}^{b-1}\nu(t)\Bigg)\Bigg(\sum\limits_{t=a}^{b-1}p(t)q(t)\Bigg)\\
\nonumber
&+\dfrac{\psi(\lambda)p(a)+\left(1-\psi(1-\lambda)\right)p(b)}{2}\Bigg(\sum\limits_{t=a}^{b-1}\nu(t)\Bigg)\Bigg(\sum\limits_{t=a}^{b-1}q(t)\Bigg)\\
\nonumber
&+\dfrac{\psi(\lambda)q(a)+\left(1-\psi(1-\lambda)\right)q(b)}{2}\Bigg(\sum\limits_{t=a}^{b-1}\nu(t)\Bigg)\Bigg(\sum\limits_{t=a}^{b-1}p(t)\Bigg)\\
\nonumber
 &- \sum\limits_{t=a}^{b-1}q(t)\Bigg(\sum\limits_{s=a}^{b-1} \nu(s)p(s+1)\Bigg) - \sum\limits_{t=a}^{b-1}p(t)\Bigg(\sum\limits_{s=a}^{b-1} \nu(s)q(s+1)\Bigg)\Bigg|\\
&\leq \sum\limits_{t=a}^{b-1}\Big(P|q(t)| + Q|p(t)|\Big)\Bigg(\sum\limits_{s=a}^{b-1} \big|K(s,t)\big|\Bigg),
\end{align}
where $\nu(t)=w(t+1)-w(t)$ on $[a, b],$
\begin{equation}
K(s, t)=
\begin{cases}
w(s)-\left(w(a)+\psi(\lambda)\frac{w(b)-w(a)}{2}\right), ~~~~s\in[a, t),\\
w(s)-\left(w(a)+(1+\psi(1-\lambda))\frac{w(b)-w(a)}{2}\right), ~~~~s\in[t, b],
\end{cases}
\end{equation}
$P=\sup\limits_{a<t<b-1}\Big|\Delta p(t)\Big|<\infty ~~and~~ Q=\sup\limits_{a<t<b-1}\Big|\Delta q(t)\Big|<\infty.$
\end{cor}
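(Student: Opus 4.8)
The plan is to derive this corollary as the direct specialization of Theorem~\ref{Gruss} to the discrete time scale $\mathbb{T}=\mathbb{Z}$, so no new analytic estimate is needed; the work is purely a translation of the delta calculus into ordinary difference calculus. First I would record the three discrete identities that drive every substitution: on $\mathbb{Z}$ the forward jump operator is $\sigma(t)=t+1$, the delta derivative is the forward difference $f^{\Delta}(t)=\Delta f(t)=f(t+1)-f(t)$, and the delta integral collapses to a finite sum, $\int_a^b f(t)\,\Delta t=\sum_{t=a}^{b-1}f(t)$. In particular $w^{\Delta}(t)=w(t+1)-w(t)=\nu(t)$, which is exactly the hypothesis recorded in the statement, so the kernel $K(s,t)$ keeps its form verbatim because it is expressed through $w$ alone.

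Next I would push these substitutions through each term on the left-hand side of Theorem~\ref{Gruss}. Every single integral $\int_a^b(\cdot)\,\Delta t$ becomes the sum $\sum_{t=a}^{b-1}(\cdot)$, so the three product terms transcribe immediately. The two subtracted double terms require the jump identity: using $\sigma(s)=s+1$ one has $\int_a^b\nu(s)p(\sigma(s))\,\Delta s=\sum_{s=a}^{b-1}\nu(s)p(s+1)$, and likewise for $q$, turning the nested integrals into the nested sums displayed in the corollary.

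For the right-hand side the same integral-to-sum rule converts $\int_a^b\bigl(P|q(t)|+Q|p(t)|\bigr)\bigl(\int_a^b|K(s,t)|\,\Delta s\bigr)\,\Delta t$ into the double sum $\sum_{t=a}^{b-1}\bigl(P|q(t)|+Q|p(t)|\bigr)\bigl(\sum_{s=a}^{b-1}|K(s,t)|\bigr)$. The constants $P$ and $Q$ are reinterpreted as suprema of the forward differences $\Delta p$ and $\Delta q$; the only point demanding care is that the supremum range contracts from $a<t<b$ to $a<t<b-1$, reflecting that $\Delta p(t)$ and $\Delta q(t)$ involve $p(t+1)$ and $q(t+1)$, so the last integer at which a difference is formed inside the sum is $t=b-1$.

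The main obstacle is bookkeeping rather than mathematics: one must verify that the summation ranges (upper limit $b-1$ throughout) and the shifted arguments $s+1$ coming from the jump operator are applied consistently in every term. Once this is checked, the inequality of Theorem~\ref{Gruss} reads off verbatim as the claimed discrete inequality, so the corollary follows without further estimation.
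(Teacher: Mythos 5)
Your proposal is correct and matches the paper's (implicit) argument exactly: the paper states Corollary~\ref{cor18} without proof, treating it as the routine specialization of Theorem~\ref{Gruss} to $\mathbb{T}=\mathbb{Z}$ via $\sigma(t)=t+1$, $f^{\Delta}(t)=\Delta f(t)$, and $\int_a^b f(t)\,\Delta t=\sum_{t=a}^{b-1}f(t)$, which is precisely the translation you carry out. Your attention to the shifted arguments $s+1$ and the supremum range $a<t<b-1$ is consistent with the paper's statement.
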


\begin{cor}\label{cor19}
For the case when $\psi(\lambda)=\lambda,$ $w(t)=t$ and $\mathbb{T}=\mathbb{R},$  Theorem \ref{Gruss} boils down to 
\begin{align}
\nonumber
&\Bigg| 2(1-\lambda)(b-a)\Bigg(\int_a^bp(t)q(t) dt\Bigg)+\dfrac{\lambda(b-a)\Big(p(a)+p(b)\Big)}{2}\Bigg(\int_a^bq(t) dt\Bigg)\\
\nonumber
&+\dfrac{\lambda(b-a)\Big(q(a)+q(b)\Big)}{2}\Bigg(\int_a^bp(t) dt\Bigg)- 2\Bigg(\int_a^bq(t) dt\Bigg)\Bigg(\int_a^b p(t) dt\Bigg)\Bigg|\\
&\leq \int_a^b\Big(P|q(t)| + Q|p(t)|\Big)\Bigg(t^2-t(a+b)+\frac{a^2+b^2}{2}\Bigg) dt.
\end{align}
Here,
\begin{equation}
K(s, t)=
\begin{cases}
s-\left(a+\lambda\frac{b-a}{2}\right), ~~~~s\in[a, t),\\
s-\left(a+(2-\lambda)\frac{b-a}{2}\right), ~~~~s\in[t, b],
\end{cases}
\end{equation}
$P=\sup\limits_{a<t<b}\Big|p'(t)\Big|<\infty ~~and~~ Q=\sup\limits_{a<t<b}\Big|q'(t)\Big|<\infty.$
\end{cor}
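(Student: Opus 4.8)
The plan is to obtain Corollary~\ref{cor19} as a direct specialization of Theorem~\ref{Gruss}, so no fresh identity is required; the work is entirely in simplifying both sides under $\psi(\lambda)=\lambda$, $w(t)=t$ (hence $\nu\equiv 1$) and $\mathbb{T}=\mathbb{R}$. First I would record the numerical effect of these choices: $1+\psi(1-\lambda)-\psi(\lambda)=2(1-\lambda)$, both boundary coefficients $\psi(\lambda)$ and $1-\psi(1-\lambda)$ collapse to $\lambda$, $\int_a^b\nu(t)\,dt=b-a$, and $\sigma(s)=s$ so that every $p(\sigma(s))$, $q(\sigma(s))$ becomes $p(s)$, $q(s)$. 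Substituting term by term into the left-hand side of Theorem~\ref{Gruss} reproduces the first three bracketed expressions of the corollary; in particular the two surviving integrals merge, since $\int_a^b q(t)\big(\int_a^b p(s)\,ds\big)\,dt+\int_a^b p(t)\big(\int_a^b q(s)\,ds\big)\,dt=2\big(\int_a^b p\big)\big(\int_a^b q\big)$, yielding the $-2(\int_a^b q)(\int_a^b p)$ term.

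On the right-hand side the factor $P|q(t)|+Q|p(t)|$ transfers verbatim, so the whole task reduces to evaluating the inner weight $\int_a^b|K(s,t)|\,ds$ for the specialized piecewise-linear kernel, whose two branches vanish at the breakpoints $\alpha:=a+\lambda\frac{b-a}{2}$ and $\beta:=a+(2-\lambda)\frac{b-a}{2}$. I would compute this exactly as in the proof of Corollary~\ref{cor2}: split $\int_a^b=\int_a^t+\int_t^b$ and then split each subinterval again at $\alpha$ (respectively $\beta$), where the linear argument changes sign. Because $h_2(x,y)=\tfrac12(x-y)^2$ on $\mathbb{R}$, the four pieces integrate to
$$\int_a^b|K(s,t)|\,ds=\tfrac12\Big[(\alpha-a)^2+(t-\alpha)^2+(\beta-t)^2+(b-\beta)^2\Big].$$

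To finish I would simplify this sum. Using $\alpha-a=\lambda\frac{b-a}{2}=b-\beta$ and setting $u:=t-\frac{a+b}{2}$, so that $t-\alpha=u+(1-\lambda)\frac{b-a}{2}$ and $\beta-t=(1-\lambda)\frac{b-a}{2}-u$, the cross terms in $(t-\alpha)^2+(\beta-t)^2=2u^2+2(1-\lambda)^2\frac{(b-a)^2}{4}$ cancel, leaving $u^2+\frac{(b-a)^2}{4}\big[\lambda^2+(1-\lambda)^2\big]$. Recognizing $u^2+\frac{(b-a)^2}{4}=(t-\frac{a+b}{2})^2+\frac{(b-a)^2}{4}=t^2-t(a+b)+\frac{a^2+b^2}{2}$ would then match the kernel weight stated in the corollary.

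The delicate point, and the step I would scrutinize most, is precisely this last cancellation. The sign bookkeeping for the absolute values is routine once one fixes the ordering $a\le\alpha\le\frac{a+b}{2}\le\beta\le b$ (valid for $\lambda\in[0,1]$) and places $t$ in $[\alpha,\beta]$ as in Corollary~\ref{cor2}. However, the residual factor $\lambda^2+(1-\lambda)^2$ equals $1$ only when $\lambda\in\{0,1\}$; for a general parameter it does not reduce to the stated $\lambda$-free weight $t^2-t(a+b)+\frac{a^2+b^2}{2}$. I would therefore either restrict to $\lambda=0$ (the classical Gr\"uss case, where the weight becomes exactly the kernel $E$ of Theorem~\ref{Pach2}) or retain the $\lambda$-dependent weight $(t-\frac{a+b}{2})^2+\frac{(b-a)^2}{4}(\lambda^2+(1-\lambda)^2)$ in the bound. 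Settling which convention is intended is the one genuine obstacle; everything else is direct substitution.
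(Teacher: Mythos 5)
Your specialization of the left-hand side is exactly what is intended (the paper gives no separate proof for this corollary; it is meant to follow by direct substitution into Theorem~\ref{Gruss}), and your evaluation of the kernel weight is correct: writing $\alpha=a+\lambda\frac{b-a}{2}$, $\beta=a+(2-\lambda)\frac{b-a}{2}$, one gets for $t\in[\alpha,\beta]$
\[
\int_a^b|K(s,t)|\,ds=\Big(t-\tfrac{a+b}{2}\Big)^2+\frac{(b-a)^2}{4}\big(\lambda^2+(1-\lambda)^2\big),
\]
which coincides with the printed weight $t^2-t(a+b)+\frac{a^2+b^2}{2}=\big(t-\frac{a+b}{2}\big)^2+\frac{(b-a)^2}{4}$ only when $\lambda\in\{0,1\}$. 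So the discrepancy you flag is real: for $\lambda\in(0,1)$ the corollary's weight is not the exact value of $\int_a^b|K(s,t)|\,ds$.

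Where your write-up stops short is in drawing the conclusion; you leave the statement unproved by ending with a choice between restricting to $\lambda=0$ and rewriting the bound. Neither is necessary: since $\lambda^2+(1-\lambda)^2\le 1$ on $[0,1]$, your exact weight is dominated by the printed one, so the inequality as stated still follows from your computation --- it is simply not sharp for interior $\lambda$ (your $\lambda$-dependent weight is in fact a strictly better bound there). One further point needs attention: unlike Corollary~\ref{cor2}, here $t$ ranges over all of $[a,b]$ in the outer integral, so the case $t\notin[\alpha,\beta]$ must also be covered. This costs nothing, because $\int_a^t|s-\alpha|\,ds\le\frac12\big[(\alpha-a)^2+(t-\alpha)^2\big]$ and $\int_t^b|s-\beta|\,ds\le\frac12\big[(\beta-t)^2+(b-\beta)^2\big]$ hold for every $t\in[a,b]$ (with equality in the regime you treated), so the four-term expression, and hence the printed weight, remains an upper bound throughout. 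With these two closing observations your argument establishes the corollary exactly as stated.
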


\begin{remark}
If we take $\lambda=0,$ Corollary \ref{cor19} reduces to Theorem \ref{Pach2}.
\end{remark}




\end{document}